\font\tenmsb=msbm10 \font\sevenmsb=msbm7 \font\fivemsb=msbm5
\font\teneufm=eufm10 \font\seveneufm=eufm7 \font\fiveeufm=eufm5
\def\co#1{
}
\renewcommand{\epsilon}{\varepsilon}
\renewcommand{\setminus}{\smallsetminus}
\def \normal{\triangleleft }
\newcommand{\Q}{\mathbb Q}
\newcommand{\Z}{\mathbb Z}
\newcommand{\fpinfty}{{\FP}_{\infty}}
\newcommand{\FP}{\operatorname{FP}}
\newcommand{\UUFP}{\underline{\underline{\operatorname{FP}}}}
\newcommand{\UFP}{\underline{\operatorname{FP}}}
\newcommand{\cohom}[3]{H^{{\raise1pt\hbox{$\scriptstyle#1$}}}(#2\>\!,#3)}
\newcommand{\tatecohom}[3]%
  {\widehat H^{{\raise1pt\hbox{$\scriptstyle#1$}}}(#2\>\!,#3)}
\newcommand{\Cohom}[3]%
  {H^{{\raise1pt\hbox{$\scriptstyle#1$}}}\big(#2\>\!,#3\big)}
\newcommand{\Tatecohom}[3]%
  {\widehat H^{{\raise1pt\hbox{$\scriptstyle#1$}}}\big(#2\>\!,#3\big)}
\newcommand{\homol}[3]{H_{{\lower1pt\hbox{$\scriptstyle#1$}}}(#2\>\!,#3)}
\newcommand{\homolog}[2]{H_{{\lower1pt\hbox{$\scriptstyle#1$}}}(#2)}
\newcommand{\mono}{\rightarrowtail}
\newcommand{\epi}{\twoheadrightarrow}
\newcommand{\eg}{{\underline EG}}
\newcommand{\uueg}{{\underline{\underline E}}G}
\newcommand{\blah}{{\phantom a}}
\newcommand{\OXG}{\mathcal O_{\mathcal X}G}
\newcommand{\OFG}{\mathcal O_{\mathcal F}G}
\newcommand{\OVCG}{\mathcal O_{\mathcal{VC}}G}
\newcommand{\uuz}{\underline{\underline\Z}}
\newtheorem{thm}{Theorem}[section]
\newtheorem{cor}[thm]{Corollary}
\newtheorem{prop}[thm]{Proposition}
\newtheorem{lemma}[thm]{Lemma}
\newtheorem{conjecture}[thm]{Conjecture}
\newenvironment{dem}{\noindent \underline{Proof:}}{\hfill$\square$\medskip}
\theoremstyle{remark}
\newtheorem{remark}[thm]{Remark}
\title{Cohomological finiteness conditions in Bredon cohomology}
\author{D.~ H. ~Kochloukova}
\address{Dessislava H.~Kochloukova, Department of Mathematics, State University of Campinas, Cx. P. 6065,
13083-970 Campinas, SP, Brazil}
\email{desi@ime.unicamp.br}
\author{C.~Mart\'inez-P\'erez}
\address{Conchita Mart\'inez-P\'erez, Departamento de Matem\'aticas, Universidad de Zaragoza,
50009 Zaragoza, Spain} \email{conmar@unizar.es}
\author{B.~ E.~A.~Nucinkis}
\address{Brita E.~A.~Nucinkis, School of Mathematics, University of Southampton, Southampton,
SO17 1BJ, United Kingdom}
\email{bean@soton.ac.uk}
\date{\today} 
\keywords{}
\subjclass[2000]{
20J05}
\thanks{This work was supported by EPSRC grant EP/F045395/1  and  LMS Scheme 4 grant 4708.
The first named author was partially supported by "bolsa de produtividade em pesquisa", CNPq, Brazil. 
The second named author was also supported by Gobierno de Aragon and
MTM2007-68010-C03-01}
\begin{document}

\maketitle

\thispagestyle{empty}

\begin{abstract} We show that any soluble group $G$ of type Bredon-$\FP_{\infty}$ 
with respect to the family of all virtually cyclic subgroups such that centralizers of infinite order elements are of  type $\FP_{\infty}$ must be virtually cyclic. To prove this, we first reduce the problem to the case of polycyclic groups and then we show  that a polycyclic-by-finite group with finitely many conjugacy classes of maximal virtually cyclic subgroups is virtually cyclic. Finally we discuss refinements of this result:  we  only impose the property Bredon-$\FP_n$  for some $n \leq 3$ and restrict to  abelian-by-nilpotent, abelian-by-polycyclic or (nilpotent of class 2)-by-abelian groups.
\end{abstract}

\section{Introduction}


In this paper we study soluble groups $G$ of type Bredon-$\FP_{\infty}$  with respect to the family ${\mathcal{VC}}$ of all virtually cyclic subgroups of $G$.  Bredon cohomology with respect to a family $\mathcal X$ of subgroups plays a role in studying classifying spaces for families similar to the role played by ordinary cohomology in studying Eilenberg-Mac Lane spaces.  By a family $\mathcal X$ of subgroups we mean a set of subgroups of $G$ which is closed under conjugation by elements of $G$ and taking subgroups. A $G$-CW complex $X$ is said to be a model for a classifying space for a family $\mathcal X$ of subgroups of $G$, denoted by $E_{\mathcal X}G$,
if $X^H$ is contractible if $H \in {\mathcal X}$ and empty otherwise.  For the family $\mathcal F$ of finite subgroups, the classifying space for proper actions, also denoted by $\eg$, has received a lot of attention. The classifying space for the family ${\mathcal{VC}}$, denoted by $\uueg$, has only recently been looked at \cite{jpl, lueckweiermann}.
In \cite{jpl} the following  was conjectured:

\begin{conjecture}\label{conj}\cite [Juan-Pineda and Leary]{jpl}
Let $G$ be a group admitting a finite model for
 $\uueg$. Then $G$ is virtually cyclic.
\end{conjecture}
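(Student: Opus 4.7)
The plan is to extract the strongest possible structural consequences from the existence of a finite model $X$ for $\uueg$, and then combine them with a large-scale dichotomy on $G$ to force virtual cyclicity. I proceed in three stages.

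First I collect the immediate consequences of the finite-model hypothesis. Since $X$ has finitely many cells, $G$ has only finitely many conjugacy classes of cell stabilizers, and every maximal virtually cyclic subgroup $V$ of $G$ is forced to coincide with the stabilizer of some point in $X^V$; hence $G$ has only finitely many conjugacy classes of maximal virtually cyclic subgroups. Since $\mathcal F\subset\mathcal{VC}$, the same $X$ is a model for $\eg$ as well, so $G$ admits a finite model for $\eg$ and in particular has finite $\vcd$. Finally, $G$ is of type Bredon-$\FP_\infty$ with respect to $\mathcal{VC}$.

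Second I localize at centralizers of infinite-order elements. For any such $g\in G$ the fixed-point set $X^{\langle g\rangle}$ is a finite subcomplex, and is contractible because $\langle g\rangle\in\mathcal{VC}$. The centralizer $C_G(g)$ acts cellularly on $X^{\langle g\rangle}$ with stabilizers of the form $C_G(g)\cap\operatorname{Stab}_G(\sigma)$, each of which is virtually cyclic. Thus $X^{\langle g\rangle}$ is a finite model for $\underline{\underline{E}}C_G(g)$, so the finite-model hypothesis is inherited by every centralizer of an infinite-order element. Induction on $\vcd$ then lets me assume that every proper such centralizer is already virtually cyclic.

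Third I attempt to close the argument by a Tits-alternative-style dichotomy. If $G$ contains a non-abelian free subgroup $\langle a,b\rangle$, then by the previous step $C_G(a)$ is virtually cyclic, and the infinitely many pairwise non-conjugate (in $G$) maximal cyclic subgroups produced inside $\langle a,b\rangle$ should contradict the first-stage finiteness of conjugacy classes of maximal virtually cyclic subgroups. If on the other hand $G$ has no non-abelian free subgroup, the aim is to deduce that $G$ is virtually soluble and apply the main theorem of this paper to conclude that $G$ is virtually cyclic. The main obstacle lies precisely in this last stage: a Tits alternative is available in the linear setting but not in general, and a group of finite $\vcd$ with no non-abelian free subgroup need not be virtually soluble. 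A fully general proof therefore seems to require a direct structural characterization of groups admitting a finite model for $\uueg$ that bypasses this dichotomy; producing such a characterization is the heart of the conjecture and, at the time of writing, remains open.
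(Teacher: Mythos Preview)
The statement you attempt to prove is recorded in the paper as a \emph{conjecture} (Conjecture~\ref{conj}), not a theorem; the paper establishes it only for soluble and, by extension, elementary amenable groups. There is therefore no proof in the paper to compare against, and your closing acknowledgment that the general case remains open is exactly right.

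Beyond the admitted gap in Stage~3, however, Stages~1 and~2 contain genuine errors. In Stage~1 the inclusion $\mathcal F\subset\mathcal{VC}$ does \emph{not} make $X$ itself a model for $\eg$: for infinite virtually cyclic $H$ the set $X^H$ is contractible rather than empty, so the isotropy condition for $\eg$ fails. (That a finite $\uueg$ yields a finite $\eg$ is a theorem of L\"uck--Weiermann quoted at the end of \S2, but it requires an actual construction, not the same space.) More seriously, in Stage~2 the complex $X^{\langle g\rangle}$ is in general neither a finite complex nor a model for $\underline{\underline E}\,C_G(g)$. For the second point, take any infinite virtually cyclic $H\leq C_G(g)$ with $H\cap\langle g\rangle$ finite; then $\langle g,H\rangle$ is virtually $\Z^2$, hence not in $\mathcal{VC}$, so $(X^{\langle g\rangle})^H=X^{\langle g,H\rangle}=\varnothing$ rather than contractible. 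The induction on $\vcd$ therefore never gets off the ground. What the hypothesis actually buys, and what the paper uses (Proposition~\ref{uufpn}), is only that each centralizer $C_G(K)$ is of ordinary type $\FP_\infty$; in the soluble case this is already enough to force polycyclicity, after which the conjugacy-class analysis of \S4 finishes the argument.
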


\noindent In \cite{lueckweiermann}  the dimension of the classifying
space $\uueg$ for virtually polycyclic groups is given in terms of
the Hirsch length. 

\noindent In this note we shall address the above conjecture purely algebraically by considering finiteness conditions in Bredon cohomology. This is motivated by the fact that a group $G$ has a finite type model for a classifying space with isotropy in a family $\mathcal X$ of subgroups of $G$ if and only if
$G$ is of type Bredon-$\fpinfty$ with respect to $\mathcal X$ and there is a model for a classifying space with finite $2$-skeleton \cite{lm}.

\noindent
Finiteness conditions for the family ${\mathcal F}$ of all finite subgroups in soluble groups are by now very well understood. Recently it was shown that a soluble group of type $\FP_{\infty}$ is of type Bredon-$\FP_{\infty}$ for the class $\mathcal F$ \cite{martineznucinkis} and admits a finite model for $\eg$ \cite{kmn}.  The proofs there utilised the following result which can be seen as the algebraic counterpart of a Theorem given by L\"uck \cite[Theorem 4.2]{lueck}:
 A group $G$ is of type Bredon-$\FP_\infty$ with respect to the family $\mathcal F$ if and only if $G$ has finitely many conjugacy classes of finite subgroups and each centraliser $C_G(K)$ of a finite subgroup $K$ is of type $\fpinfty.$ For a proof of this fact see \cite{kmn}.

\noindent The main result, Theorem \ref{mainresult} gives an answer to Conjecture \ref{conj} for certain soluble
groups.  As we will also show, this result can easily be extended to elementary amenable groups.

\medskip\noindent{\bf Theorem 5.1.} {\sl Soluble groups $G$ of type Bredon-$\FP_{\infty}$ with respect to the class of virtually cyclic groups such that  centralizers of infinite order elements are of type $FP_\infty$ are virtually cyclic.}

\noindent Part of the proof of Theorem \ref{mainresult} reduces 
to the study of groups where centralizers of cyclic subgroups are
finitely generated. This is not a new problem and some partial
answers can be found in the work of J. Lennox \cite{Lennox2}. In
\cite{Lennox1} J. Lennox asked whether a soluble group $G$ is
polycyclic if all centralizers of its finitely generated subgroups
are finitely generated. A positive answer to this question was given
in \cite{Lennox2} when $G$ is nilpotent-by-polycyclic and this
result was strengthened further by showing  that a group $G$ with an
abelian normal subgroup $A$ such that $G/ A$ is nilpotent and the
centralizer (in $G$) of every element of $A$ is finitely generated,
is polycyclic. In the same paper it was also conjectured that the
same result holds for abelian-by-polycyclic groups.  In Theorem
\ref{FP3} we give an affirmative answer of this problem for
abelian-by-polycyclic groups $G$ of homological type $\FP_3$ using
the recent result that such  a group $G$ is
nilpotent-by-abelian-by-finite \cite{GKR}.  It is still an open
question whether a soluble group $G$ is polycyclic if the
centralizer of any element of $G$ is finitely generated.

The final step of the proof of Theorem \ref{mainresult} involves conjugacy classes of maximal virtually cyclic subgroups. More precisely we show that if a polycyclic-by-finite group has finitely many conjugacy classes of maximal virtually cyclic subgroups then it is virtually cyclic. We show a more general result:  Every nilpotent-by-abelian-by-finite group with the maximal condition on virtually cyclic subgroups and finitely many conjugacy classes of maximal virtually cyclic subgroups is virtually cyclic, see Theorem \ref{quotientnaf}.

\bigskip\noindent{\bf Remark added in 2018.} Since the publication of the original version of this article in 2011, Conjecture \ref{conj} has been proven for  elementary amenable groups \cite{gw}, one-relator groups, CAT(0)-groups, acylindrically hyperbolic groups, $3$-manifold groups \cite{vPW1}, and linear groups \cite{vPW2}. In all of these examples it was shown that these groups cannot be of type $\UUFP_0.$

\section{Bredon cohomology with respect to virtually cyclic subgroups}

\noindent Let us begin this section with recalling a few prerequisites from Bredon cohomology. Here we replace the group $G$ by the orbit category $\OXG$, where, as before, $\mathcal X$ denotes a family of subgroups of $G$.
The category $\OXG$ has as objects the transitive $G$-sets with stabilizers in $\mathcal X$. Morphisms in this category are $G$-maps.
 In this note we shall mainly be concerned with the families
$\mathcal F$ of finite subgroups  and $\mathcal{VC}$ of virtually cyclic subgroups and we  write $\OFG$ and $\OVCG$ for the orbit categories.  Modules over the orbit category, or $\OXG$-modules, are contravariant functors from
the orbit category to the category of abelian groups. Exactness is defined pointwise: A sequence $A\to B\to C$ of $\OXG$-modules
is exact at $B$ if and only if
$A(\Delta)\to B(\Delta)\to C(\Delta)$ is exact at $B(\Delta)$ for every transitive $G$-set $\Delta$.
The category of $\OXG$-modules has enough projectives, which are constructed as follows:
For any $G$-sets $\Delta$ and $\Omega$ we denote by $[\Delta,\Omega]$ the set of $G$-maps from $\Delta$ to $\Omega$.
Let $\Z[\Delta,\Omega]$ be the free abelian group on $[\Delta,\Omega]$. Now fix  $\Omega$ and let $\Delta$  range over the transitive
 $G$-sets with  stabilizers in $\mathcal X$. This gives rise to  an $\OXG$-module $\Z[\blah,\Omega]$.
Let $G/K$ be a transitive $G$-set with $K \in \mathcal X.$  Then $\Z[\blah, G/K]$ is a free module. Projective modules are defined analogously to the ordinary case.
The trivial $\OXG$-module, denoted $\Z(\blah)$, is the constant functor from $\OXG$ to the category of abelian groups. Bredon cohomology is now defined via projective resolutions of $\Z(\blah)$.
The notions of type Bredon-$\FP$, Bredon-$\FP_n$ ($n \geq 0$ an integer) and Bredon-$\FP_\infty$
are defined in terms of projective resolutions of $\Z(\blah)$ over $\OXG$ analogously to
the classical notions of type $\FP$, $\FP_n$ and $\fpinfty$.

To stay in line with notation previously used, we say a module is of type $\UFP_\infty$  if it is of type Bredon-$\fpinfty$ with respect to the family of all finite subgroups and is of type $\UUFP_\infty$ if it is of type Bredon-$\fpinfty$ with respect to the family of all virtually cyclic subgroups. We also denote by $\underline\Z$ and $\underline{\underline\Z}$ the respective constant modules.

\medskip
\noindent We shall now give a description of groups of type $\UUFP_n$ and $\UUFP_\infty$ in terms of cohomological finiteness conditions of centralisers of virtually cyclic subgroups and conjugacy in $\mathcal{VC}$.  Let us consider the following two conditions on a group $G$:
\begin{itemize}
\item[(1)] $G$ has the maximal condition on virtually cyclic subgroups (max-vc) and has finitely many conjugacy classes of maximal virtually cyclic subgroups.
\item[(2)] There are finitely many virtually cyclic groups $H_1,\ldots, H_n$ of $G$ such that every virtually cyclic  subgroup is subconjugate to one  of the $H_i$s.
\end{itemize}

\begin{remark} Obviously, (1) implies (2)  but the converse is not true in general. The following example was pointed out to us by Peter Kropholler. For a torison-free group $H$ the famous construction by Higman-Neumann-Neumann \cite[6.4.6]{robinson} embeds $H$ in a torsion-free group $H^*$ such that all non-trivial elements in $H$ are conjugated. In a torsion-free group every virtually cyclic group is cyclic so obviously $G = H^*$  satisfies (2).  But for certain particular choice of $H$, for example $H = \mathbb{Z} [{1 \over 2}]$, the group $G = H^*$ does not satisfy  (1).

\end{remark}

\noindent We have, however:

\begin{lemma}\label{max} If $G$ has property (2) and max-vc then it has property (1).
\end{lemma}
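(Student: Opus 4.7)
The plan is to show that under the two hypotheses, every maximal virtually cyclic subgroup of $G$ must actually be conjugate to one of the given $H_i$, so there are at most $n$ conjugacy classes of such subgroups. Since the max-vc hypothesis is already part of (1), this will finish the proof.

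First I would fix a maximal virtually cyclic subgroup $M \leq G$; such subgroups exist in abundance because max-vc guarantees that every virtually cyclic subgroup is contained in a maximal one. By property (2), $M$ is subconjugated to some $H_i$, i.e.\ there exists $g \in G$ with $M^{g} \leq H_i$. The next step is the key observation: conjugation preserves both maximality and virtual cyclicity, so $M^{g}$ is itself a maximal virtually cyclic subgroup of $G$. Since $H_i$ is virtually cyclic and contains $M^{g}$, maximality of $M^{g}$ forces $M^{g} = H_i$. In particular every $H_i$ that witnesses the subconjugation of some maximal virtually cyclic subgroup is itself maximal, and $M$ is conjugate to such an $H_i$.

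Therefore the conjugacy classes of maximal virtually cyclic subgroups of $G$ are represented by a subset of $\{H_1, \ldots, H_n\}$, which is finite. Combined with the hypothesis max-vc, this is exactly condition (1).

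There is essentially no obstacle here: the only subtle point is to remember that conjugation takes a maximal virtually cyclic subgroup to a maximal virtually cyclic subgroup, so that the inclusion $M^g \leq H_i$ supplied by (2) must actually be an equality. The example just before the lemma shows why max-vc cannot be dropped: in $\Z[1/p]\rtimes\langle t\rangle$ the maximal infinite cyclic subgroups of $\Z[1/p]\rtimes\langle t\rangle$ form infinitely many conjugacy classes, while property (2) still holds with the single witness $\langle t\rangle$.
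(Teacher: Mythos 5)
Your argument is correct and is essentially the paper's own proof: both reduce to showing that every maximal virtually cyclic subgroup $M$ is conjugate to some $H_i$, using the observation that conjugation preserves maximality so the containment $M^g\leq H_i$ supplied by (2) must be an equality (the paper phrases this by first replacing each $H_i$ with a maximal virtually cyclic subgroup containing it, which is the same point). One inessential quibble: in your closing aside, the failure of (1) in the example $\Z[\frac{1}{p}]\rtimes\langle t\rangle$ is really the failure of max-vc itself --- $\Z[\frac{1}{p}]$ has no maximal cyclic subgroups at all --- rather than an infinitude of conjugacy classes of maximal ones.
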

\begin{proof} We only have to prove that there are finitely many conjugacy classes of maximal
virtually cyclic subgroups. But this is obvious as the subgroups
$H_1,\ldots,H_n$ of condition (2) can in this case be taken to be
maximal virtually cyclic. And for any other maximal virtually cyclic
$T$, we have $T^x\leq H_i$ for some $x\in T$, some $i$. Then $T^x$
is also maximal virtually cyclic and therefore $T^x=H_i.$
\end{proof}

\noindent The following Lemma is an analogue to \cite[Lemma 3.1]{kmn}

\begin{lemma}\label{uufp0}
Let $G$ be a group. Then $G$ is of type  $\UUFP_0$ if and only if $G$ satisfies condition (2).
\end{lemma}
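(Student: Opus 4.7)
The plan is to unpack the definition of Bredon-$\FP_0$ (equivalently, finite generation of $\uuz$ as an $\OVCG$-module) pointwise and use the elementary description of $G$-morphisms between transitive $G$-sets.

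First, I would recall that $\uuz$ is of type $\UUFP_0$ precisely when it is a finitely generated $\OVCG$-module, i.e.\ when there is an epimorphism
\[
\bigoplus_{i=1}^{n} \Z[\blah,G/H_i] \epi \uuz
\]
for some finite collection of virtually cyclic subgroups $H_1,\dots,H_n$ of $G$. Since epimorphisms of $\OVCG$-modules are tested pointwise, this is equivalent to the statement that for every $K\in\mathcal{VC}$ the evaluated map
\[
\bigoplus_{i=1}^{n} \Z[G/K,G/H_i] \lra \Z
\]
is surjective.

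Next, I would use the standard fact that the $G$-set $[G/K,G/H_i]$ is non-empty if and only if $K$ is subconjugate to $H_i$; and whenever it is non-empty, each $G$-map sends the generator to $1$ in $\uuz(G/K)=\Z$, so the evaluation map hits all of $\Z$. Hence the displayed map is surjective at $G/K$ if and only if some $H_i$ contains a conjugate of $K$. Assembling these statements, $\uuz$ is finitely generated if and only if one can choose finitely many virtually cyclic subgroups $H_1,\dots,H_n$ such that every $K\in\mathcal{VC}$ is subconjugate to at least one $H_i$; this is precisely condition~(2).

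There is no serious obstacle: the argument is a direct translation of the Yoneda/enriched-Yoneda observation used for the analogous $\UFP_0$ statement in \cite[Lemma 3.1]{kmn}, and the only thing one needs to be careful about is the pointwise description of epimorphisms in the functor category $\OVCG\text{-}\Mod$, together with the elementary description of morphism sets between transitive $G$-sets.
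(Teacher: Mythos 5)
Your proof is correct and follows essentially the same route as the paper's: both reduce type $\UUFP_0$ to the surjectivity of a map $\Z[\blah,\Omega]\epi\uuz$ with $\Omega$ a finite disjoint union of orbits $G/H_i$, evaluate at an arbitrary $G/K$, and use that $[G/K,G/H_i]$ (equivalently $(G/H_i)^K$) is non-empty exactly when $K$ is subconjugate to $H_i$. The only cosmetic difference is that the paper phrases the evaluation as $\Z\Omega^K\epi\Z$ rather than orbit-by-orbit.
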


\begin{proof}
Let $G$ be of type $\UUFP_0$. Then the trivial module $\uuz$ has a finitely generated free Bredon module mapping onto it. Hence there is a $G$-finite $G$-set $\Omega$ with virtually cyclic stabilizers and an
epimorphism
$$\Z[\blah,\Omega]\epi\uuz.$$
Now let $K$ be an arbitrary virtually cyclic subgroup of $G$. Evaluating the above epimorphism at $G/K$ we obtain an
epimorphism $\Z\Omega^K\epi\Z$ and therefore $\Omega^K$ is non-empty. Hence $K$ is subconjugated to a representative of the finitely many conjugacy classes of subgroups which have fixed points in $\Omega$. Conversely, if $G$ satisfies condition (2), then we can take
$\displaystyle\Omega=\bigsqcup_{H_i}G/H_i$ where $H_i$ runs through a set of conjugacy class representatives
as above, and the obvious augmentation map $\Z[\blah,\Omega]\epi\uuz$ is an epimorphism.
\end{proof}

\noindent We say a Bredon module $M$ is finitely generated if there is a finite $\OXG$-set $\Sigma$ in the sense of L\"uck \cite[9.16, 9.19]{lueckbook} such that there is a free Bredon module $F$ on $\Sigma$ mapping onto $M$.  In particular, a projective Bredon-module $P$ is finitely generated if there is a $G$-finite $G$-set $\Omega$ with stabilisers in $\mathcal X$ such that $P$ is a direct summand of $\Z[\blah,\Omega].$


\
\begin{lemma}\label{fgproj}
Let $P(\blah )$ be a  finitely generated projective $\OVCG$-module, and let $K$ be a finite subgroup. Then, after evaluation, $P(G/K)$ is a module of type $\fpinfty$ over the Weyl-group $WK=N_G(K)/K.$
\end{lemma}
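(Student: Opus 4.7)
The approach follows the analogous result for the family of finite subgroups, \cite[Lemma 3.1]{kmn}: reduce to permutation modules on $K$-fixed point sets, and decompose these into $WK$-orbits with virtually cyclic stabilisers.

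First, I would reduce to a free module on one orbit. Since $P$ is a finitely generated projective $\OVCG$-module, it is a direct summand of a free module $F = \Z[\blah, \Omega]$ with $\Omega = \bigsqcup_{i=1}^n G/H_i$ a finite disjoint union of transitive $G$-sets whose stabilisers $H_i$ are virtually cyclic. Evaluation at $G/K$ is exact and $WK$-equivariant under the canonical identification $\mathrm{Aut}_{\OVCG}(G/K) \cong WK$, so $P(G/K)$ is a $\Z WK$-direct summand of $F(G/K) = \bigoplus_{i=1}^n \Z[G/K, G/H_i]$. Since a direct summand of a module of type $\fpinfty$ is itself of type $\fpinfty$, it suffices to prove that $\Z[G/K, G/H]$ is of type $\fpinfty$ over $\Z WK$ for each $H \in \mathcal{VC}$.

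Next, identify $\Z[G/K, G/H]$ with the permutation module $\Z[(G/H)^K]$ via $f \mapsto f(K)$; the $WK$-action corresponds to left translation $(nK)\cdot gH = ngH$ for $n \in N_G(K)$. Decompose $(G/H)^K$ into $WK$-orbits. For a representative $gH$ the $N_G(K)$-stabiliser is $N_G(K) \cap gHg^{-1}$, so its image in $WK$ is
\[
L \;=\; (N_G(K) \cap gHg^{-1})K/K \;\cong\; (N_G(K) \cap gHg^{-1})/(K \cap gHg^{-1}),
\]
which embeds into $gHg^{-1}/(K \cap gHg^{-1})$, a quotient of a virtually cyclic group, and is therefore virtually cyclic. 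For each such orbit, $\Z[WK/L]$ is of type $\fpinfty$ over $\Z WK$: the subgroup $L$ is of type $\fpinfty$ (being virtually cyclic) and induction from $\Z L$ to $\Z WK$ preserves $\fpinfty$.

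The remaining, and most delicate, step is to show that $(G/H)^K$ decomposes into only finitely many $WK$-orbits. This is substantially more subtle than the finite-subgroup analogue, since a virtually cyclic $H$ can contain infinitely many distinct subgroups $G$-conjugate to $K$. The finiteness should be extracted from a Mackey-style description of $N_G(K)\backslash(G/H)^K$ via the assignment $gH \mapsto g^{-1}Kg$, reducing the count to the $H$-conjugacy classes of subgroups of $H$ in the $G$-conjugacy class of $K$, together with the stabiliser computation above; this is where the structure of virtually cyclic $H$ enters crucially. Once finiteness is in hand, $\Z[G/K, G/H]$ is a finite direct sum of modules of type $\fpinfty$ and is therefore itself $\fpinfty$, completing the proof.
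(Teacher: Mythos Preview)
Your argument tracks the paper's almost exactly: reduce to $\Z[G/K,G/H]\cong\Z[(G/H)^K]$, decompose into $WK$-orbits with stabilisers $WK_x=(N_G(K)\cap H^{x^{-1}})K/K$, note that these are virtually cyclic so each $\Z[WK/WK_x]$ is of type $\FP_\infty$, and then assemble. At this point the paper simply asserts that $\Z[(G/H)^K]$ is a finitely generated $WK$-module and invokes an argument analogous to \cite[Proposition~6.3]{nucinkis99}; it does not spell out the finiteness of the orbit decomposition either.

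The step you flag as ``most delicate'' is a genuine gap in your write-up, and the route you propose does not close it. Your Mackey description is correct: the $N_G(K)$-orbits on $(G/H)^K$ biject with $H$-conjugacy classes of subgroups of $H$ that are $G$-conjugate to $K$. But the virtually cyclic structure of $H$ does \emph{not} by itself force this set to be finite. Take $G=BS(1,2)=\langle a,t\mid tat^{-1}=a^{2}\rangle$ and $K=H=\langle a\rangle$. Then $t^{n}Kt^{-n}=\langle a^{2^{n}}\rangle\leq H$ for every $n\geq 0$, and since $H$ is abelian these subgroups are pairwise non-conjugate in $H$. One checks that $N_G(K)$ equals the normal closure of $a$ and contains no nontrivial power of $t$, so the cosets $t^{-n}H$ lie in pairwise distinct $WK$-orbits on $(G/H)^K$. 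Hence $\Z[(G/H)^K]$ is not even finitely generated over $\Z WK$ in this example, and the finiteness you hoped to extract from the structure of $H$ fails without further hypotheses on $G$. Whatever the paper intends here, your sketch does not supply it.
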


\begin{proof}  By the remarks above, it suffices to show that for each virtually cyclic group $H$, the module $\Z(G/H)^K \cong \Z[G/K,G/H]$ is of type $\FP_\infty$ as a $WK$-module. Furthermore,
$(G/H)^K$ is a $WK$-set in which each $xH$ is stabilized by $(N_G(K)\cap H^{x^{-1}})K /K = WK_x.$
Now we claim that 
$$\Z(G/H)^K = \bigoplus_{\begin{array}{c} { \small{x\in N_G(K)\backslash G/H}} \\ {\small{K^x\leq H}}\end{array}} \Z(WK/WK_x)$$
is a finitely generated $WK$-module. To see it, observe that there are finitely many $H$-conjugacy classes of finite subgroups of $H$ (see for example \cite[2.4]{martineznucinkis}) and this yields finitely many possible choices of the double class $N_G(K)xK$. 
An argument analogous to
\cite[Proposition 6.3]{nucinkis99} reduces to showing that each
$\Z(WK/WK_x)$ is of type $\FP_\infty$. But this follows from the
fact that $WK_x$ is virtually cyclic so it is a group of type
$\FP_\infty.$
\end{proof}

\begin{cor}
Let $M(\blah)$ be a a Bredon-module of type $\UUFP_n$. Then, for each finite subgroup $K$, the WK-module $M(G/K)$ is of type $\FP_n.$
\end{cor}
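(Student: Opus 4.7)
The plan is to deduce the corollary directly from Lemma~\ref{fgproj} by a pointwise argument. By the definition of $\UUFP_n$, the module $M$ fits into a partial projective resolution
$$P_n \to P_{n-1} \to \cdots \to P_0 \to M \to 0$$
in which each $P_i$ is a finitely generated projective $\OVCG$-module. Since exactness in the category of $\OVCG$-modules is defined pointwise, evaluating at the transitive $G$-set $G/K$ produces an exact sequence of $WK$-modules
$$P_n(G/K) \to P_{n-1}(G/K) \to \cdots \to P_0(G/K) \to M(G/K) \to 0,$$
where the $WK$-structure comes from the natural action of $WK=N_G(K)/K$ on each term.

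The key input is Lemma~\ref{fgproj}, applied to each $P_i$: it tells us that $P_i(G/K)$ is of type $\FP_\infty$, and hence in particular of type $\FP_n$, over $WK$. To conclude that $M(G/K)$ is itself of type $\FP_n$, I would invoke the classical shifting argument, breaking the evaluated sequence into short exact sequences
$$0 \to K_i \to P_i(G/K) \to K_{i-1} \to 0$$
with $K_{-1}:=M(G/K)$, and proceeding by induction using the standard fact that in a short exact sequence $0\to A \to B \to C \to 0$ with $A$ of type $\FP_{k-1}$ and $B$ of type $\FP_k$, the quotient $C$ is also of type $\FP_k$. Since $K_{n-1}$ is a quotient of the finitely generated module $P_n(G/K)$ it is of type $\FP_0$, and $n$ iterations of the shifting lemma then yield that $M(G/K)$ is of type $\FP_n$.

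I do not anticipate any genuine obstacle: the entire argument is a pointwise transcription of the given partial resolution, combined with an application of Lemma~\ref{fgproj} and a routine syzygy shuffle. The only steps requiring even a moment's care are the facts that evaluation at $G/K$ is an exact functor and sends finitely generated Bredon projectives to $WK$-modules of type $\FP_\infty$. The first is immediate from the pointwise definition of exactness in the category of $\OVCG$-modules, while the second is precisely the content of Lemma~\ref{fgproj}.
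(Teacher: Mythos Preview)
Your argument is correct and is exactly the dimension-shifting strategy the paper has in mind: evaluate a finitely generated partial projective resolution at $G/K$, invoke Lemma~\ref{fgproj} to see that each $P_i(G/K)$ is of type $\FP_\infty$ over $WK$, and then climb back along the syzygies using \cite[Proposition~1.4]{Bieribook}. The paper compresses all of this into a single sentence, but the content is identical to what you have written out.
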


\begin{proof}
This is a straight forward dimension shift applying \cite[Proposition 1.4]{Bieribook}.
\end{proof}

\begin{prop}\label{uufpn}
Let $G$ be a group of type $\UUFP_n$. Then the following two equivalent statements hold:
\begin{enumerate}
\item $G$ satisfies condition (2) and for each finite subgroup $K$, the Weyl-group  $WK$ is of type $\FP_n.$
\item $G$ satisfies condition (2) and for each finite subgroup $K$, the centralizer $C_G(K)$ is of type $\FP_n.$
\end{enumerate}
\end{prop}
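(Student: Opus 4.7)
The plan is to split the proof into two parts: first, verify that statement (1) holds for any group $G$ of type $\UUFP_n$; second, establish the equivalence (1) $\iff$ (2) as an abstract condition on $G$.

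For part one, that $G$ satisfies condition (2) is immediate from Lemma \ref{uufp0}, since $\UUFP_n$ implies $\UUFP_0$. For each virtually cyclic subgroup $K$, I apply the preceding Corollary to the trivial Bredon module $M = \uuz$, which is of type $\UUFP_n$ by hypothesis. Its evaluation at $G/K$ is the trivial $WK$-module $\Z$, and the Corollary says this module is of type $\FP_n$ over $WK$; equivalently, $WK$ itself is of type $\FP_n$.

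For part two, I compare $WK$ and $C_G(K)$ through the intermediate subgroup $C_G(K)K/K \cong C_G(K)/Z(K)$ of $WK$, where I use that $C_G(K)\cap K = Z(K)$. The quotient $WK/(C_G(K)K/K) \cong N_G(K)/(C_G(K)K)$ embeds via the conjugation action of $N_G(K)$ on $K$ into $\operatorname{Out}(K) = \operatorname{Aut}(K)/\operatorname{Inn}(K)$, and $\operatorname{Out}(K)$ is finite for every virtually cyclic $K$. Hence $C_G(K)K/K$ has finite index in $WK$, so $WK$ is of type $\FP_n$ if and only if $C_G(K)/Z(K)$ is. Since $Z(K)\leq K$ is itself virtually cyclic, hence of type $\FP_\infty$, the short exact sequence
\begin{equation*}
1 \to Z(K) \to C_G(K) \to C_G(K)/Z(K) \to 1
\end{equation*}
combined with the standard dimension-shift argument (as in \cite[Proposition 1.4]{Bieribook}) yields $C_G(K)/Z(K)$ of type $\FP_n$ if and only if $C_G(K)$ is. Concatenating these equivalences gives (1) $\iff$ (2).

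The only point requiring care is the finiteness of $\operatorname{Out}(K)$ for virtually cyclic $K$: for finite $K$ this is trivial since $\operatorname{Aut}(K)$ itself is finite, and for infinite virtually cyclic $K$ it reduces to the observation that $K$ admits a unique maximal finite normal subgroup $F$ with $K/F\in\{\Z,D_\infty\}$, and the restriction-coinduction map $\operatorname{Aut}(K) \to \operatorname{Aut}(F)\times\operatorname{Aut}(K/F)$ has image controlled by the finite groups $\operatorname{Aut}(F)$, $\operatorname{Out}(\Z)$ and $\operatorname{Out}(D_\infty)$, while its kernel modulo inner automorphisms is parametrised by derivations of $K/F$ into $F$ and is again finite. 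This classical fact about virtually cyclic groups is the one technical ingredient beyond direct consequences of the earlier lemmas, and is the main obstacle to keep in mind.
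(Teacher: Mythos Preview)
Your argument is correct and, for part one, identical to the paper's: condition~(2) via Lemma~\ref{uufp0}, and $WK$ of type $\FP_n$ via the Corollary following Lemma~\ref{fgproj}. For the equivalence (i)$\Leftrightarrow$(ii) the paper takes a slightly different path, passing through $N_G(K)$: first $WK\leftrightarrow N_G(K)$ because $K$ is of type $\FP_\infty$ (\cite[Proposition~2.7]{Bieribook}), then $N_G(K)\leftrightarrow C_G(K)$ by asserting that $\operatorname{Aut}(K)$ is finite, whence $|N_G(K):C_G(K)|<\infty$. Your route via $C_G(K)K/K\cong C_G(K)/Z(K)$ and the finiteness of $\operatorname{Out}(K)$ is in fact the more careful one: the paper's claim that $\operatorname{Aut}(K)$ is finite fails in general (for $K=D_\infty$ the automorphism group is infinite, and indeed $N_G(K)/C_G(K)$ is infinite already when $G=K=D_\infty$), whereas the finiteness of $\operatorname{Out}(K)$ for virtually cyclic $K$ is the correct ingredient, and your sketch of why it holds is adequate. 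One minor remark: the extension step you invoke for $Z(K)\leq C_G(K)$ is \cite[Proposition~2.7]{Bieribook} rather than Proposition~1.4.
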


\begin{proof}  Statement (i) follows directly from Lemmas \ref{uufp0} and \ref{fgproj}. Since $K$ is finite, it follows that $N_G(K)$ is of type $\FP_n$ if and only $WK=N_G(K)/K$ is \cite[Proposition 2.7]{Bieribook}. $Aut(K)$ is a finite group and hence the index $|N_G(K):C_G(K)|$ is finite and therefore $N_G(K)$ is of type $\FP_n$ if and only $C_G(K)$ is of type $\FP_n$.
\end{proof}

\noindent Let us also remark on an obvious link with $\UFP_\infty.$

\begin{cor}
Let $G$ be a group of type $\UUFP_\infty$. Then $G$ is of type $\UFP_\infty.$
\end{cor}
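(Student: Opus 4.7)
The plan is to verify both conditions of L\"uck's characterisation of $\UFP_\infty$ recalled in the introduction: that $G$ has only finitely many conjugacy classes of finite subgroups, and that $C_G(K)$ is of type $\FP_\infty$ for every finite subgroup $K$.

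The second condition is immediate from what has already been done. A finite subgroup is in particular virtually cyclic, so applying Proposition~\ref{uufpn}(ii) to such a $K$ gives that $C_G(K)$ is of type $\FP_\infty$.

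For the first condition, Proposition~\ref{uufpn} also tells us that $G$ satisfies condition~(2) with respect to $\mathcal{VC}$: there exist finitely many virtually cyclic subgroups $H_1,\ldots,H_n$ of $G$ such that every virtually cyclic subgroup of $G$, and in particular every finite subgroup, is $G$-subconjugate to some $H_i$. I would then reduce the problem to showing that each $H_i$ contains only finitely many conjugacy classes of finite subgroups \emph{within itself}; pushing such representatives forward along the inclusions $H_i\hookrightarrow G$ would bound the number of $G$-conjugacy classes of finite subgroups of $G$.

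The substantive step is therefore the finiteness fact about virtually cyclic groups just mentioned. This is well known: by the standard structure theorem a virtually cyclic group $H$ is either finite, or a semidirect product $F\rtimes\Z$ with $F$ finite and normal, or an amalgamated product $F_1*_{F_0}F_2$ of finite groups; equivalently $H$ acts cocompactly (on a point or) on $\R$ with finite cell stabilisers, and any finite subgroup of $H$ fixes a point and is therefore conjugate into a vertex stabiliser. Since there are only finitely many orbits of vertices, this produces the required finiteness. This is the only step that needs care; once it is in place, L\"uck's criterion closes the argument.
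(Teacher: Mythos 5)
Your proof is correct. The paper states this corollary without proof (as ``an obvious link''), and your argument is exactly the natural one the authors intend: combine Proposition~\ref{uufpn} (condition~(2) plus $\FP_\infty$ centralizers of virtually cyclic, hence of finite, subgroups) with L\"uck's characterisation of $\UFP_\infty$ quoted in the introduction, the only substantive input being that a virtually cyclic group has finitely many conjugacy classes of finite subgroups, which you justify correctly via the structure theorem and the action on the Bass--Serre line.
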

\begin{proof} Note that by Proposition \ref{uufpn} the group satisfies (2) and this together with the fact that virtually cyclic groups have finitely many conjugacy classes of finite subgroups  implies that also $G$ has only a finite number of conjugacy classes of finite subgroups. Moreover, Proposition \ref{uufpn} also implies that for any finite $K\leq G$, $N_G(K)/K$ is $\FP_\infty$ and the result follows by \cite[3.1]{kmn}.
\end{proof}

\begin{prop}\cite[Corollary 5.4]{lueckweiermann}
Let $G$ be a group admitting a finite (finite type) model for $\uueg$. Then $G$ admits a finite (finite type) model for $\eg$.
\end{prop}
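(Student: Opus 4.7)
The plan is to use the pushout description of $\uueg$ due to Lück and Weiermann that expresses $\uueg$ as a $G$-pushout involving $\eg$ and classifying spaces of normalizers of maximal infinite virtually cyclic subgroups. Since every finite subgroup lies in $\mathcal{VC}$, any model $X$ for $\uueg$ already has contractible fixed points $X^H$ for every finite $H$; the only obstruction to $X$ itself being a model for $\eg$ is the presence of cells with infinite virtually cyclic stabilizers (equivalently, nonempty fixed point sets $X^V$ for infinite virtually cyclic $V$).

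Concretely, I would invoke \cite[Theorem 2.3]{lueckweiermann} to write
\[
\begin{CD}
\bigsqcup_{V \in \mathcal{I}} G \times_{N_G[V]} E_{\mathcal{F}}(N_G[V]) @>>> \eg \\
@VVV @VVV \\
\bigsqcup_{V \in \mathcal{I}} G \times_{N_G[V]} E_{\mathcal{F}\cup\{V\}}(N_G[V]) @>>> \uueg
\end{CD}
\]
as a $G$-pushout, where $\mathcal{I}$ is a set of conjugacy class representatives of maximal infinite virtually cyclic subgroups of $G$. If $X$ is a finite (finite type) model for $\uueg$, then $X$ has only finitely many orbits of cells, and each maximal infinite virtually cyclic $V$ fixes a point of $X$ (since $X^V$ is contractible), hence by maximality is conjugate to a cell stabilizer; thus $\mathcal{I}$ is finite. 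Moreover, the normalizer $N_G[V]$ inherits appropriate finiteness from the centralizer $C_G(V)$, which is of type $\FP_\infty$ by Proposition \ref{uufpn}, together with the virtually cyclic $V$, so finite (finite type) $G$-models exist for both classifying spaces appearing in the left column of the pushout.

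Reading the pushout in reverse, three of the four corners are finite (finite type), hence the fourth corner $\eg$ is also finite (finite type). The main obstacle is making this reverse extraction geometrically rigorous: a pushout does not generally admit an ``inverse'', so one has to realise the inclusion of the normalizer pieces into $\uueg$ as a $G$-cofibration and identify $\eg$ with its complementary cellular piece, thereby reading off the finite (finite type) structure of $\eg$ directly from the cell structure of $\uueg$ and of the normalizer pieces, rather than from abstract pushout manipulations.
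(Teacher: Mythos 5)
The paper offers no argument of its own here---it simply cites \cite[Corollary 5.4]{lueckweiermann}---so the comparison is with that source, and your proposal does not reproduce a valid proof. The gap is the one you flag yourself at the end: ``reading the pushout in reverse'' is not a legitimate step. In the L\"uck--Weiermann pushout, $\eg$ is one of the \emph{inputs} used to build $\uueg$; knowing that the output and the two left-hand corners admit finite (finite type) models gives no control over the remaining input corner, because pushouts are not invertible and finiteness of a $G$-CW structure is not recoverable from the cofibre data (cancellation can make the output far smaller than an input). Saying that one should ``realise the inclusion as a $G$-cofibration and identify $\eg$ with the complementary cellular piece'' does not repair this: $\eg$ is not a complementary piece of $\uueg$ in any sense that transfers a finite cell structure back to it. A secondary problem is the left column itself: type $\FP_\infty$ of the centralizers (Proposition \ref{uufpn}) does not by itself yield finite, or even finite type, models for $\underline{E}(N_G[V])$ --- that would additionally require finite presentability, Bredon finiteness over the family of finite subgroups, and, for a finite model, a dimension bound, none of which you establish.

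The intended argument runs in the opposite direction, down the lattice of families, and uses no pushout. For nested families $\mathcal F\subseteq\mathcal G$ there is a standard replacement construction: if $X$ is a finite (finite type) $G$-CW model for $E_{\mathcal G}G$ and every isotropy group $V$ occurring in $X$ admits a finite (finite type) model for $E_{\mathcal F\cap V}(V)$, then substituting each equivariant cell $G/V\times D^n$ by $G\times_V E_{\mathcal F\cap V}(V)\times D^n$, with attaching maps supplied by the universal property of classifying spaces, produces a finite (finite type) model for $E_{\mathcal F}G$. Apply this with $\mathcal G=\mathcal{VC}$ and $\mathcal F$ the family of finite subgroups: every virtually cyclic $V$ admits a finite model for $\underline{E}V$, namely a point when $V$ is finite and the real line with the $V$-action through its infinite cyclic or infinite dihedral quotient when $V$ is infinite. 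This proves the Proposition with no hypotheses beyond the stated one and, in particular, without invoking Proposition \ref{uufpn}.
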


\section{Soluble groups}

\begin{lemma} \label{commutativealgebra} Let $Q$ be a finitely generated abelian group and $V$ be a finitely generated $\Z Q$-module. Then $V$ is finitely generated as an additive group if and only if $\Z Q / P_i$ is finitely generated as an additive group for every minimal associated prime $P_i$ of $V$.
\end{lemma}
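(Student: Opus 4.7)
The plan is to prove this as a standard piece of commutative algebra, using the fact that $\Z Q$ is a commutative Noetherian ring. Indeed, writing $Q\cong \Z^n\oplus T$ with $T$ a finite abelian group, $\Z Q$ is a localisation of a quotient of a polynomial ring in finitely many variables over $\Z$, hence Noetherian. Two tools will do all the work: the fact that a minimal associated prime gives an embedded copy of $\Z Q/P$ in $V$, and the existence of a prime filtration of a finitely generated module over a Noetherian ring.

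For the forward direction, suppose $V$ is finitely generated as an additive abelian group. If $P$ is a (minimal) associated prime of $V$, then by definition there is some $v\in V$ with $\operatorname{Ann}_{\Z Q}(v)=P$. The map $\Z Q\to V$ sending $1\mapsto v$ then has kernel $P$ and so induces an embedding $\Z Q/P\hookrightarrow V$. Since subgroups of finitely generated abelian groups are finitely generated, $\Z Q/P$ is finitely generated as an additive group.

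For the backward direction, suppose that $\Z Q/P_i$ is finitely generated additively for every minimal associated prime $P_i$ of $V$. Choose a prime filtration
\[
0=V_0\subset V_1\subset\cdots\subset V_m=V
\]
with $V_j/V_{j-1}\cong \Z Q/Q_j$ for prime ideals $Q_j$ of $\Z Q$; such a filtration exists because $\Z Q$ is Noetherian and $V$ is finitely generated (Matsumura, \emph{Commutative Ring Theory}, Thm.~6.4). Each $Q_j$ lies in $\operatorname{Supp}(V)$, and the minimal elements of $\operatorname{Supp}(V)$ are exactly the minimal associated primes of $V$; thus each $Q_j$ contains some minimal associated prime $P_{i(j)}$. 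The surjection $\Z Q/P_{i(j)}\epi \Z Q/Q_j$ then shows that $\Z Q/Q_j$ is finitely generated additively. Since an extension of finitely generated abelian groups by finitely generated abelian groups is finitely generated, induction on the length $m$ of the filtration shows that $V$ is finitely generated as an additive group.

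There is no real obstacle here beyond assembling the two standard commutative-algebra ingredients; the only mild care needed is to observe that a general prime in the support need not itself be a minimal associated prime, which is why the argument passes through the surjection $\Z Q/P_{i(j)}\epi\Z Q/Q_j$ rather than directly using the hypothesis.
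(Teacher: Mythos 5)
Your proof is correct, and the ``if'' direction takes a genuinely different route from the paper's. For the ``only if'' direction both arguments are essentially the same: an associated prime $P$ gives an embedding $\Z Q/P\hookrightarrow V$, and subgroups of finitely generated abelian groups are finitely generated (the paper embeds the whole direct sum $\bigoplus_i \Z Q/P_i$ at once, but that is an inessential difference). For the converse, you filter the \emph{module} $V$ by a prime filtration with subquotients $\Z Q/Q_j$, note that each $Q_j$ in the support contains a minimal associated prime $P_{i(j)}$, and pass through the surjection $\Z Q/P_{i(j)}\twoheadrightarrow \Z Q/Q_j$; the paper instead works at the level of the \emph{ring}: it sets $I=\operatorname{ann}_{\Z Q}(V)$, uses $\sqrt{I}=P_1\cap\cdots\cap P_s$ to see that $\Z Q/\sqrt{I}$ is additively finitely generated, filters $\Z Q/I$ by the powers $\sqrt{I}^{\,j}$ (some power of $\sqrt{I}$ lies in $I$ since $\Z Q$ is Noetherian), and concludes because $V$ is a finitely generated $\Z Q/I$-module. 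Both arguments rest on the same underlying facts (Noetherianity of $\Z Q$ and the identification of the minimal associated primes with the minimal primes of the support/annihilator), and the closure of additive finite generation under extensions and quotients. Your version outsources the filtration to the standard prime-filtration theorem and is arguably slightly more direct; the paper's version is self-contained modulo the elementary observation that $\sqrt{I}^{\,j}/\sqrt{I}^{\,j+1}$ is a finitely generated $\Z Q/\sqrt{I}$-module. Either is a complete proof.
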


\begin{proof} Suppose that $\Z Q / P_i$ is finitely generated for $1 \leq i \leq s$, where $P_1, \ldots, P_s$ are the minimal associated primes of $V$.
Let $I$ be the annihilator $ann_{\Z Q}(V)$. Then $\sqrt{I} = P_1
\cap \ldots \cap P_s$ and $\Z Q / P_1 \cap \ldots \cap P_s$ embeds
in $\Z Q / P_1 \oplus \ldots \oplus \Z Q/ P_s$ via the diagonal map
, so $\Z Q / \sqrt{I}$ is finitely generated as an additive group.
Let $s$ be a natural number such that
  $\sqrt{I}^s \subseteq I$. Note that every quotient $\sqrt{I}^j / \sqrt{I}^{j+1}$ is a finitely generated $\Z Q / \sqrt{I}$-module, so $\Z Q / \sqrt{I}^s$ and consequently $\Z Q / I$ are finitely generated as additive groups. Since $V$ is finitely generated as a $\Z Q/ I$-module, $V$ is finitely generated as an additive group.

Conversely suppose that $V$ is finitely generated as an additive group. Since $\Z Q/ P_1 \oplus \ldots \oplus \Z Q/ P_s$
embeds in $V$ it follows that every $\Z Q / P_i$ is finitely generated.
\end{proof}

  \begin{lemma} \label{FP2} Let $N_2 \mono G_2 \epi Q_2$ be a short exact sequence of groups with $G_2$ of type $\FP_2$. Suppose that $G_2$ and $N_1$ are subgroups of a group $G$, $G_2 \subseteq N_{G}(N_1)$, $N_1\cap G_2=N_2$ and $N_2$ is a normal subgroup  in $N_1$ such that $N_1 / N_2$ is finitely generated and abelian. Then $G_1 = N_1 G_2$ is of type $\FP_2$.
  \end{lemma}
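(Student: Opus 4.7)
The plan is to verify $G_1$ satisfies the standard relation-module characterisation: $G_1$ is of type $\FP_2$ if and only if it is finitely generated and the kernel $R$ of some surjection $F\twoheadrightarrow G_1$ from a finitely generated free group $F$ is normally generated in $F$ by finitely many elements, equivalently $R^{ab}$ is finitely generated as a $\Z G_1$-module. Finite generation of $G_1$ is immediate: since $G_2$ is of type $\FP_2$ it has a finite generating set $X_2$, and lifting a generating set of the finitely generated abelian group $N_1/N_2$ to $n_1,\dots,n_k\in N_1$, any $ng\in N_1G_2$ can be written as $(\prod n_i^{a_i})(zg)$ with $z\in N_2\subseteq G_2$, so $X_2\cup\{n_1,\dots,n_k\}$ generates $G_1$.

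Now set $F:=F(X_2)\ast F(n_1,\dots,n_k)$, $F_2:=F(X_2)$, $R:=\ker(F\to G_1)$, and $R_2:=\ker(F_2\to G_2)\subseteq R$. The $\FP_2$ hypothesis on $G_2$ gives finitely many $r_1,\dots,r_p\in R_2$ whose $F_2$-normal closure is $R_2$. To these I would add four finite families lying in $R$: (a) \emph{conjugation relations} $\sigma_{xi}$ for $x\in X_2$ and $i\leq k$, encoding $xn_ix^{-1}=(\prod_j n_j^{b_{xij}})z_{xi}$, where the exponents come from the $G_2$-action on the finitely generated abelian module $N_1/N_2$ and $z_{xi}\in N_2\subseteq F_2$; (b) \emph{commutator relations} $\rho_{ij}:=[n_i,n_j]c_{ij}^{-1}$ with $c_{ij}\in N_2\subseteq F_2$, lying in $R$ because $N_1/N_2$ is abelian; (c) finitely many relations encoding the presentation of the finitely generated abelian group $N_1/N_2$ (to absorb any torsion); (d) finitely many relations $\tau_l$, indexed by a generating set of the finitely generated abelian subgroup $(N_1\cap G_2)/N_2\leq N_1/N_2$, each equating a specific product of $n_j$'s with a specific element of $G_2\subseteq F_2$.

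If this finite list normally generates $R$ in $F$, its images generate $R^{ab}$ as a $\Z G_1$-module and we are done. The crux, and the main obstacle, is proving normal generation. The strategy for $w\in R$ is a two-phase rewriting: the $\sigma$-relations push every $n_i$-letter past every $X_2$-letter in $w$, picking up corrections in $N_2\subseteq F_2$; the $\rho$- and type-(c)-relations then collect the surviving $n_i$-string into a canonical monomial $\prod n_i^{a_i}$ with commutator remainders landing again in $F_2$. Because $w=1$ in $G_1$, the final $\prod n_i^{a_i}$ must lie in $N_1\cap G_2$, so its class in $N_1/N_2$ sits in $(N_1\cap G_2)/N_2$, and the type-(d)-relations rewrite it as a word in $F_2$; the surviving element of $F_2$ lies in $R_2$ and is absorbed by $r_1,\dots,r_p$. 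Termination of this rearrangement is where nilpotency of $N_1$ is essential: every commutator generated during rewriting lies in some term of the lower central series of $N_1$, so a double induction on the number of $X_2$-$n_i$ inversions in $w$ and on the nilpotency step of the commutators involved closes the argument.
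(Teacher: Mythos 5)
Your overall strategy --- adjoin to a presentation of $G_2$ the finitely many generators $n_1,\dots,n_k$ together with conjugation, commutation, torsion and intersection relations, and exploit the normal form $(\prod_i n_i^{a_i})h$ with $h\in G_2$ --- is the same as the paper's. But there is a genuine gap at the very first step: you assert that type $\FP_2$ for $G_2$ yields finitely many $r_1,\dots,r_p\in R_2$ whose $F_2$-\emph{normal closure} is $R_2$, and that for $G_1$ it suffices to show $R$ is normally generated by a finite set, ``equivalently'' that $R^{\mathrm{ab}}$ is a finitely generated $\Z G_1$-module. These two conditions are not equivalent: normal generation of the kernel by finitely many elements is finite presentability, which is strictly stronger than type $\FP_2$ (there exist groups of type $\FP_2$, indeed $\FP_\infty$, that are not finitely presented). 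Since $G_2$ is only assumed to be of type $\FP_2$, the elements $r_1,\dots,r_p$ you start from need not exist, and the goal you set yourself (normal generation of $R$ by a finite set) is both unobtainable from the hypotheses and stronger than what the lemma asserts.

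The repair is to work in the relation module throughout, which is what the paper does: keep the full, possibly infinite, set $R$ of defining relators of $G_2$, of which only the images of finitely many $r_1,\dots,r_p$ are required to generate $R_2^{\mathrm{ab}}$ over $\Z G_2$. Then $\langle X_2\cup\{n_1,\dots,n_k\}\mid R\cup S\rangle$, with $S$ your finite list (a)--(d), presents $G_1$, so $R_1^{\mathrm{ab}}$ (for the enlarged kernel $R_1$) is generated over $\Z G_1$ by the images of $R\cup S$; and every element of $R_2$ is, modulo $[R_2,R_2]\subseteq[R_1,R_1]$, a product of $F_2$-conjugates of the $r_i$, so the image of $R$ already lies in the $\Z G_1$-submodule generated by $r_1,\dots,r_p$. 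This yields $\FP_2$ without ever claiming normal generation. Two smaller remarks. First, your appeal to nilpotency of $N_1$ for termination is misplaced: the commutators $[n_i,n_j]$ land in $N_2\subseteq G_2$ at the first step because $N_1/N_2$ is abelian, so no descent through the lower central series occurs; the genuine delicacy is that pushing $X_2$-letters past $n_i$-letters creates new letters of both kinds, which is cleaner to handle via the normal-form (von Dyck) argument than by naive rewriting. Second, your type (d) relations, accounting for $(N_1\cap G_2)/N_2$, are a sensible precaution which the paper's uniqueness claim for the normal form quietly presupposes.
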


  \begin{proof}  The lemma is obvious if $G_2$ is normal in $G_1$ but we cannot assume this.
  Since $N_1 / N_2$ is a finitely generated abelian group, it is a direct sum of cyclic groups. Let $T$ be a finite subset of $N_1$ such that the images of the elements of $T$ in $N_1 / N_2$ are the generators of the direct cyclic summands. We split $T$ as a disjoint union $T = T_0 \cup T_1$, where the images in $N_1 / N_2$ of the elements of $T_0$ have finite order
 and the   images in $N_1 / N_2$ of the elements of $T_1$ have infinite order.
  Write $T_0 = \{ t_1, \ldots , t_m \}$, $a_i$ is the order of the image of $t_i$ in $N_1 / N_2$, $T_1 = \{ t_{m+1} ,\ldots , t_s \}$.
  Fix a generating set $Y = \{g_1, \ldots, g_s \}$ of $G_2$.
Note that every element $g$ of $G_1$ can be written in a unique way as
  \begin{equation} \label{normalform}
  g = t_1^{z_1} \ldots t_s^{z_s} h \hbox{ where } h \in G_2, z_i \in \Z \hbox{ and } 0 \leq z_i < a_i \hbox{ for } i \leq m.
  \end{equation}
  Then for some reduced words $\alpha_{i,j}, \beta_i$ on $Y \cup Y^{-1}$ we have the following equations in $G_1$
 $$t_i t_j = t_j t_i \alpha_{i,j}  \hbox{ for } 1 \leq j < i \leq s ,
  t_i^{a_i} = \beta_i \hbox{ for } 1 \leq i \leq m$$
  and for $y \in Y \cup Y^{-1}$ there are reduced words $\gamma_{i,y}$ on $Y \cup Y^{-1}$ such that
  $$
  y t_i y^{-1} = t_1^{z_{1,i,y}} \ldots t_s^{z_{s,i,y}} \gamma_{i,y} \hbox{ where }
  z_{j,i,y} \in \Z \hbox{ and } 0 \leq z_{j,i,y} < a_j \hbox{ for } j \leq m
  $$

  Let $\langle Y \mid R \rangle$ be a presentation of $G_2$ that shows it is of type $\FP_2$ i.e. $Y$ is as above, $R$ might be infinite but the coresponding relation module is finitely generated as a $\Z G_2$-module. Then $G_1$ has a presentation $$\langle Y \cup T \mid R \cup S \rangle $$ where $S$ is $$ \{
  t_j^{-1} t_i^{-1} t_j t_i \alpha_{i,j} \}_{1 \leq j < i \leq s} \cup  \{ t_i^{a_i} \beta_i^{-1} \}_{m+1 \leq i \leq s } \cup
  \{ y t_i^{-1}  y^{-1} t_1^{z_{1,i,y}} \ldots t_s^{z_{s,i,y}} \gamma_{i,y}
    \}_{y \in Y \cup Y^{-1}, 1 \leq i \leq s}. $$ Indeed the group $H$ defined by the presentation $\langle Y \cup T \mid R \cup S \rangle $ maps surjectively to $G_1$  and the elements of $H$ satisfy the normal form condition (\ref{normalform}). Finally the presentation $\langle Y \cup T \mid R \cup S \rangle $ is obtained from the presentation $\langle Y \mid R \rangle$ of $G_2$ by adding finitely many relations and generators i.e. $G_1$ is of type $\FP_2$.
  \end{proof}

 \begin{thm} \label{polycyclic3}
 Let $N \mono G \epi Q$ be a short exact sequence of groups with $N$ nilpotent and $Q$ abelian, $G$ finitely generated such that every quotient of $G / Z(N)$ is of type $\FP_{2}$ and for every $g \in N$ the centraliser
 $C_G(g)$ is finitely generated and
  the image of  the centralizer $C_G(g)$ in any quotient of $G / Z(N)$ is of type $\FP_2$. Then $G$ is polycyclic.
 \end{thm}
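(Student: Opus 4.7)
The plan is to argue by induction on the nilpotency class $c$ of $N$, with Lemma~\ref{commutativealgebra} providing the key algebraic reduction at every stage. The recurring observation to be used at each level is: if $V$ is a $\Z Q$-module that is finitely generated over $\Z C$ for some subgroup $C\le Q$, and $v\in V$ is fixed by $C$ with $\operatorname{Ann}_{\Z Q}(v)=P$, then by Noetherianity of $\Z C$ the submodule $\Z Q\cdot v\cong\Z Q/P$ is finitely generated over $\Z C$, and since $C$ acts trivially on it, $\Z Q/P$ is finitely generated as an abelian group. Combined with Lemma~\ref{commutativealgebra}, this reduces the problem of proving a given finitely generated $\Z Q$-module is finitely generated abelian to producing, for each minimal associated prime, a suitable witnessing subgroup $C$.

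For the base case $c=1$, $N$ itself is an abelian $\Z Q$-module, finitely generated because $G$ is finitely generated and $Q$ is finitely presented. For any $a\in N$ we have $N\subseteq C_G(a)$, and the image of $C_G(a)$ in $Q$ is exactly $C_Q(a)$. The hypothesis that $C_G(a)$ is finitely generated, together with $C_Q(a)$ being finitely presented, yields $N$ finitely generated over $\Z C_Q(a)$; taking $C=C_Q(a)$ and $v=a$ for each minimal associated prime closes this case.

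For $c\ge 2$ the same argument applied to $\gamma_c N$ shows it is finitely generated as an abelian group: it is finitely generated as a $\Z Q$-module, each $b\in\gamma_c N$ lies in $Z(N)$ so $N\subseteq C_G(b)$, and the hypotheses plus finite presentation of $C_Q(b)$ force $N$ to be finitely generated as a normal subgroup of $C_G(b)$, whence $\gamma_c N$ is finitely generated over $\Z C_Q(b)$ and the generic observation applies. The plan is then to apply the inductive hypothesis to $G/\gamma_c N$, whose nilpotent normal subgroup $N/\gamma_c N$ has class $c-1$; polycyclicity of $G/\gamma_c N$ combined with $\gamma_c N$ finitely generated abelian gives $G$ polycyclic. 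The condition on quotients of the centre transfers routinely, because the preimage in $G$ of $Z(N/\gamma_c N)$ contains $Z(N)$, so every quotient of $(G/\gamma_c N)/Z(N/\gamma_c N)$ is already a quotient of $G/Z(N)$.

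The main obstacle is verifying the centralizer condition for $G/\gamma_c N$: for $\bar g\in N/\gamma_c N$ with lift $g\in N$, the preimage in $G$ of $C_{G/\gamma_c N}(\bar g)$ is $K=\{x\in G:[x,g]\in\gamma_c N\}$, which properly contains $C_G(g)$ in general. The coset space $K/C_G(g)$ injects set-theoretically into the (now finitely generated abelian) group $\gamma_c N$ via the crossed homomorphism $x\mapsto[x,g]$, but upgrading this to finite generation of $K/\gamma_c N$ and to $\FP_2$ of its images in the relevant quotients of $G/Z(N/\gamma_c N)$ requires combining the $\FP_2$-property of the image of $C_G(g)$ in quotients of $G/Z(N)$ with the finitely generated abelian layer $\gamma_c N$. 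This is precisely the extension-of-$\FP_2$-by-finitely-generated-abelian-nilpotent situation for which Lemma~\ref{FP2} was formulated, and executing this bootstrap cleanly is the technical heart of the argument.
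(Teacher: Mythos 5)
Your overall skeleton (induct on the nilpotency class, use Lemma~\ref{commutativealgebra} together with the centralizer of a witness element to show the bottom central layer is finitely generated abelian, then pass to a quotient of smaller class) is essentially the paper's strategy, but two steps do not hold up as written. First, for $c\ge 2$ you deduce ``$\gamma_c N$ is finitely generated over $\Z C_Q(b)$'' from ``$N$ is finitely generated as a normal subgroup of $C_G(b)$''. That implication is false in general: normal finite generation of a nilpotent group only controls $N/N'$ as a module, and the lower central quotients are images of diagonal tensor powers of $N/N'$, which need not be finitely generated. (For $N$ free nilpotent of class $2$ on $\{x_i\}_{i\in\Z}$ with an infinite cyclic group acting by shift, $N$ is normally generated by $x_0$ alone, yet $\gamma_2 N$ has one module generator $[x_i,x_j]$ for each value of $j-i$, so it is not finitely generated as a module.) The correct route---and the reason the theorem carries the hypothesis that images of centralizers in quotients of $G/Z(N)$ are of type $\FP_2$, rather than merely that centralizers are finitely generated---is to apply that hypothesis to the identity quotient: since $b\in Z(N)$ one has $Z(N)\le C_G(b)$, so the image of $C_G(b)$ in $G/Z(N)$ is $C_G(b)/Z(N)$, which is of type $\FP_2$; combined with finite generation of $C_G(b)$ this makes $Z(N)$ finitely generated as a module over $C_G(b)/N\cong C_Q(b)$, and Noetherianity then hands you every submodule, including $\gamma_c N$. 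Mere finite generation of centralizers is exactly what is \emph{not} known to suffice here (this is Lennox's open question), so the substitution is not cosmetic.

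Second, the inductive step is not actually carried out. You correctly identify that the preimage $K$ of $C_{G/\gamma_c N}(\bar g)$ properly contains $C_G(g)$, and that one must upgrade the set-theoretic injection of $K/C_G(g)$ into $\gamma_c N$ (via $x\mapsto[x,g]$) both to finite generation and to the $\FP_2$ condition on images in quotients; you point at Lemma~\ref{FP2} and then declare this ``the technical heart'' and stop. But this is the substantive content of the proof, not a routine verification. One needs the filtration $G_2=C_G(g)/Z(N)\subseteq G_1=G_2\,C_{N/Z(N)}(\bar g)\subseteq G_0=C_{G/Z(N)}(\bar g)$ (or its analogue modulo $\gamma_c N$), the observation that $C_{N/Z(N)}(\bar g)$ modulo $C_N(g)/Z(N)$ is finitely generated abelian because the map $\bar n\mapsto[n,g]$ is a genuine homomorphism into the now finitely generated group $Z(N)$, an application of Lemma~\ref{FP2} precisely because $G_2$ need not be normal in $G_1$, and finally that $G_0/G_1$ is finitely generated abelian since it embeds in $Q$. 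The same scheme must then be run in every quotient of $G/Z(N)$ to recover the full hypothesis for the induction. Until this is written out, the induction does not close.
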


 \begin{proof}

We show first that $Z(N)$ is finitely generated as a group.
 By assumption $G/Z(N)$ is of type $\FP_2$, thus the centre $Z(N)$ of $N$ is finitely generated as a $\Z Q$-module, where $Q$ acts via conjugation. Let $P_1, \ldots, P_s$ be the minimal associated primes for the $\Z Q$-module $Z(N)$, thus $M = \Z Q/ P_1 \oplus \ldots \oplus \Z Q / P_s $ embeds in $Z(N)$ as a $\Z Q$-submodule (i.e. if $v_i$ is an element of $Z(N)$ with $Ann_{\Z Q}(v_i) = P_i$
 then $M$ is isomorphic to $\sum_{1 \leq i \leq s} \Z Q v_i$)
 and we think of $M$ as a submodule of $Z(N)$. Let $a$ be the element of $M$ whose projection to $\Z Q / P_i$ is the unity element for every $1 \leq i \leq s$.

  By assumption  the centralizer $C_G(a)$ is finitely generated. Note that $N \subseteq C_G(a)$  and define $Q_0 = C_G(a) / N \subseteq Q$. Since $C_G(a) / Z(N)$ is of type $\FP_2$, $Z(N)$ is finitely generated as a $\Z Q_0$-module. Since $\Z Q_0$ is Noetherian, every $\Z Q_0$-submodule of $Z(N)$ is finitely generated. In particular $M$ is finitely generated as a $\Z Q_0$-module.
  Since $Q_0$ acts trivially on $a$ we deduce that $Q_0$ acts trivially on $M$. Thus $M$ is finitely generated as an additive abelian group and by Lemma \ref{commutativealgebra} $Z(N)$ is finitely generated.

We prove the theorem by  induction on the nilpotency class of $N$.
We aim to prove that the assumptions of the theorem hold for the quotient group $\overline{G} = G / Z(N)$ and then the proof can be completed by induction.
Write $\overline{g}$ for the image of $g \in G$ in $\overline{G}$ and define $\overline{N} = N / Z(N)$. For some fixed $g \in N$ define the homomorphsim
$$
\varphi : C_{\overline{N}}(\overline{g}) \to Z(N)
$$
that sends $\overline{n}$ to $[n,g]$. Note that $Ker(\varphi) = C_N(g) / Z(N)$ and that $C_{\overline{N}}(\overline{g}) / Ker(\varphi)$ is a subgroup of $Z(N)$, hence is a finitely generated abelian group. Then $G_0 = C_{\overline{G}}(\overline{g})$ has a filtration of subgroups
$$
G_2 = C_G(g) / Z (N) \subseteq G_1 = G_2 C_{\overline{N}}(\overline{g}) \subseteq G_0.
$$
Let $N_2 = C_N(g) / Z(N)$ and $N_1 = C_{\overline{N}}(\bar{g})$, so $N_1 / N_2$ is finitely generated and abelian. Note also that $N_1\cap G_2=N_2$. Then by Lemma \ref{FP2} $G_1$ is of type $\FP_2$. Since $G_1$ is normal in $G_0$ and $G_0 / G_1$ is finitely generated abelian, $G_0$ is of type $\FP_2$.

By assumption the image of $C_G(g)$ in any quotient of $G / Z(N)$ is of type $\FP_2$. Then the above argument applied for the images of the groups $G_2, G_1, G_0, N_1, N_2$ in a quotient of $G / Z(N)$ shows that the image of $G_0$ in any quotient of $G / Z(N)$ is of type $\FP_2$. Thus we can apply the inductive argument for $G / Z(N)$ and deduce that $G / Z(N)$ is polycyclic.

\end{proof}

\begin{cor} \label{class2}
 Let $N \mono G \epi Q$ be a short exact sequence of groups such that $N$ is nilpotent of class 2, $Q$ is abelian, $G$ is of type $\FP_2$ and for every $g \in N$ the centralizer $C_G(g)$ is of type $\FP_2$. Then $G$ is polycyclic.
 \end{cor}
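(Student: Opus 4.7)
The plan is to apply Theorem~\ref{polycyclic3} to $G$. Finite generation of $G$ and of each $C_G(g)$ is immediate from the $\FP_2$ hypotheses, so the non-trivial conditions to verify are that every quotient of $G/Z(N)$ is of type $\FP_2$ and that the image of each $C_G(g)$ in every such quotient is of type $\FP_2$. The class 2 hypothesis makes both tractable: $N/Z(N)$ is abelian and $Q$ is abelian, so $G/Z(N)$ is metabelian; and since $Z(N)\subseteq C_G(g)$ for every $g\in N$ (as $Z(N)$ centralises $N$), the image of $C_G(g)$ in $G/Z(N)$ is the metabelian group $C_G(g)/Z(N)$. For finitely generated metabelian groups, $\FP_2$ coincides with finite presentability (Bieri--Strebel), and P.~Hall's theorem that every finitely generated metabelian group satisfies max-$n$ implies every normal subgroup is normally finitely generated, so every quotient of a finitely presented metabelian group is again finitely presented.

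These facts collapse the $\FP_2$-on-all-quotients conditions into the statements that $G/Z(N)$ and each $C_G(g)/Z(N)$ are themselves of type $\FP_2$; given that $G$ and $C_G(g)$ are $\FP_2$ with $Z(N)$ normal in both, this reduces further to showing that $Z(N)$ is finitely generated as an abelian group. I would prove this finite-generation step by mirroring the opening of the proof of Theorem~\ref{polycyclic3}. Finite presentability of $G$ and of $Q$ gives that $N$ is normally finitely generated in $G$, so $N^{ab}$ is finitely generated as a $\Z Q$-module, and Noetherianity of $\Z Q$ shows that $Z(N)/[N,N]\subseteq N^{ab}$ is finitely generated as a $\Z Q$-module. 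The $\Z$-bilinearity of the commutator in a class 2 group, combined with the identification $[N,g]\cong N/C_N(g)$ and the $\FP_2$ hypothesis on $C_G(g)$, should yield that $[N,N]$, and hence $Z(N)$, is finitely generated as a $\Z Q$-module; Lemma~\ref{commutativealgebra} together with the diagonal-element argument of Theorem~\ref{polycyclic3} (applied using that $C_G(a)$ is finitely generated for $a\in Z(N)$) then yields finite generation of $Z(N)$ as an abelian group.

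The main obstacle is controlling $[N,N]$ as a $\Z Q$-module. Bilinearity in class 2 reduces a $\Z Q$-generating set for $[N,N]$ to elements of the form $[n_i,\tilde q n_j\tilde q^{-1}]$, with $n_i, n_j$ normal generators of $N$ in $G$ and $q$ running over $Q$; the distinct values as $q$ varies lie in the image of the $Q$-orbit of the class $\bar n_j\in N/Z(N)$ inside the quotient $N/C_N(n_i)$, so cutting $q$ down to finitely many $\Z Q$-orbit representatives is exactly what the $\FP_2$ assumption on centralisers must deliver.
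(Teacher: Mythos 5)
Your overall skeleton is right --- reduce to Theorem~\ref{polycyclic3}, observe that $G/Z(N)$ is metabelian because $N/Z(N)$ and $Q$ are abelian, and invoke Bieri--Strebel --- but you are using the wrong form of the Bieri--Strebel theorem, and this forces you into a detour that you do not actually complete. The statement the paper needs (and cites) is: \emph{every metabelian quotient of a group of type $\FP_2$ containing no non-cyclic free subgroups is again of type $\FP_2$} \cite{BieriStrebel}. Applied directly to $G$ and to each $C_G(g)$ (both soluble and of type $\FP_2$ by hypothesis, and every quotient of $G/Z(N)$, resp.\ every image of $C_G(g)$ therein, is a metabelian quotient of $G$, resp.\ of $C_G(g)$, since $Z(N)\subseteq C_G(g)$), this verifies all the quotient hypotheses of Theorem~\ref{polycyclic3} in one line. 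There is no need to first prove that $G/Z(N)$ is of type $\FP_2$ by showing $Z(N)$ is finitely generated; indeed, finite generation of $Z(N)$ is what the opening of the proof of Theorem~\ref{polycyclic3} \emph{deduces} from those hypotheses, so your plan proves a consequence of the theorem as a prerequisite for applying it.

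The concrete gap is your treatment of $[N,N]$. You correctly get that $N^{\mathrm{ab}}$, and hence $Z(N)/[N,N]$, is a finitely generated $\Z Q$-module, but the claim that bilinearity of the commutator plus the $\FP_2$ hypothesis on the centralizers yields finite generation of $[N,N]$ as a $\Z Q$-module is never established: the generators $[n_i,\,q n_j q^{-1}]$ range over infinitely many $q\in Q$, and nothing in the $\FP_2$ condition on $C_G(n_i)$ visibly cuts these down to finitely many $\Z Q$-orbits ($C_N(n_i)$ need not have finite index, and $N/C_N(n_i)\cong[N,n_i]$ gives no a priori finiteness). You flag this yourself as ``the main obstacle,'' and as written it is an unproved assertion on which the whole argument rests. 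Ironically, the cleanest way to see that $[N,N]$ is finitely generated as a $\Z Q$-module is again the strong Bieri--Strebel statement ($G/[N,N]$ is a metabelian quotient of the $\FP_2$ group $G$, hence $\FP_2$, hence $[N,N]$ is normally finitely generated and, being central in $N$, finitely generated over $\Z Q$) --- which brings you back to the one-line proof you should give instead.
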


\begin{proof} Every metabelian quotient of a group of type $\FP_2$  not containing non-cyclic free subgroups is of type $\FP_2$ \cite{BieriStrebel}. Then the previous theorem applies.
\end{proof}

There are known examples of groups $G$ that are split extensions of a nilpotent of class 3 group $N$ by a finite rank free abelian group $Q$ such that $G$ is finitely presented, $Z(N) = Z(G)$ is infinitely generated as a group and thus $G / Z(N)$ is not of type $\FP_2$ \cite{Abels}.
Though these examples show that the first condition of Theorem \ref{polycyclic3} is quite restrictive, at the same time these examples  do not satisfy the second type conditions about the centralizer, i.e. there is an element $g \in N$ such that
 $C_G(g)$ is not even finitely generated.

  \begin{lemma} \label{quotient} Let $G$ be a soluble group of type $\FP_{\infty}$. Then every quotient of $G$ is of type $\FP_{\infty}$.
\end{lemma}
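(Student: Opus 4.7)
The plan is to reduce the statement to Kropholler's structural theorem: every soluble group of type $\FP_{\infty}$ is \emph{constructible} in the sense of Baumslag and Bieri. Granted this, the lemma then follows from two standard closure properties of the class of constructible soluble groups, namely that it is closed under taking quotients (established by Baumslag and Bieri along with the basic theory of constructible groups) and that every constructible group is of type $\FI$, hence in particular of type $\FP_{\infty}$.

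Concretely I would carry out three short steps. First, apply Kropholler's theorem to $G$ to conclude that $G$ is constructible. Second, for an arbitrary normal subgroup $N \normal G$, note that $G/N$ is still soluble, and invoke the Baumslag--Bieri quotient-closure result to deduce that $G/N$ is itself constructible. Third, use that constructibility implies type $\FI$, hence $\FP_{\infty}$, to conclude.

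The only real difficulty is that Kropholler's theorem is a substantial result in its own right; the present lemma is essentially a short citation on top of it. If one preferred to avoid citing constructibility, one could try a more hands-on argument using that a soluble $\FP_{\infty}$ group is minimax of finite Hirsch length together with the Bieri--Strebel theorem (already invoked in Corollary~\ref{class2}) that a metabelian quotient of a group of type $\FP_2$ without non-cyclic free subgroups is again of type $\FP_2$, bootstrapped along a derived series. However, this is considerably longer, and the constructibility route via Kropholler is by far the cleanest.
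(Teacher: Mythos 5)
Your proposal is correct and is essentially identical to the paper's own proof, which likewise cites Kropholler's classification of soluble $\FP_{\infty}$ groups as constructible together with the Baumslag--Bieri result (their Theorem~4) giving quotient-closure of constructible soluble groups and the fact that constructible groups are of type $\FP_{\infty}$. No further comment is needed.
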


\begin{proof} This follows directly by \cite[Thm.~4]{baumslagbieri} and the classification of soluble $\FP_{\infty}$ groups as constructible \cite{kropholler86}, \cite{kropholler93b}.

\end{proof}

\begin{cor} \label{centraliserFP}Let $G$ be a soluble group of type $\FP_{\infty}$ such that the centraliser $C_G(g)$ is of type $\FP_{\infty}$ for every element $g \in G$. Then $G$ is polycyclic.
\end{cor}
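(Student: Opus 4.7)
The plan is to reduce to Theorem \ref{polycyclic3}. The first task is to produce a nilpotent-by-abelian structure on a finite-index subgroup of $G$. A soluble group of type $\FP_\infty$ is constructible by the theorem of Kropholler \cite{kropholler86,kropholler93b}, and the structure theory of constructible soluble groups yields a normal subgroup $G_0$ of finite index in $G$ that is nilpotent-by-abelian; write $N \mono G_0 \epi Q$ with $N$ nilpotent and $Q$ abelian. Polycyclicity of $G$ is equivalent to polycyclicity of $G_0$, and $G_0$ is itself soluble of type $\FP_\infty$; moreover for any $g \in G_0$ the centralizer $C_{G_0}(g) = G_0 \cap C_G(g)$ has finite index in $C_G(g)$ and so is also of type $\FP_\infty$. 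Hence one may replace $G$ by $G_0$ and work with the extension $N \mono G \epi Q$.

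Next I would verify the hypotheses of Theorem \ref{polycyclic3}. The group $G$ is finitely generated, being of type $\FP_\infty$. Every quotient of $G$ is of type $\FP_\infty$, and in particular of type $\FP_2$, by Lemma \ref{quotient}; since any quotient of $G/Z(N)$ is a quotient of $G$, every such quotient is $\FP_2$. For $g \in N$ the centralizer $C_G(g)$ is finitely generated, being $\FP_\infty$ by hypothesis. The image of $C_G(g)$ in a quotient $G/M$ of $G/Z(N)$ is isomorphic to $C_G(g)/(C_G(g)\cap M)$; since $C_G(g)$ is a soluble group of type $\FP_\infty$, a second application of Lemma \ref{quotient} shows that this image is $\FP_\infty$, in particular $\FP_2$. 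Theorem \ref{polycyclic3} then delivers that $G$ is polycyclic.

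I expect the main obstacle to be the structural reduction in the first step: one needs to extract from the $\FP_\infty$ hypothesis a finite-index normal subgroup that admits a nilpotent normal subgroup with abelian quotient. This requires both Kropholler's identification of soluble $\FP_\infty$ groups with constructible soluble groups and further structural information on constructible soluble groups in order to pass to a finite-index nilpotent-by-abelian subgroup. Once this reduction is in place, the remainder of the argument is a routine verification of the finiteness conditions required by Theorem \ref{polycyclic3}, using Lemma \ref{quotient} applied once to $G$ itself and once to each centralizer $C_G(g)$.
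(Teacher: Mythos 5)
Your argument is correct and is essentially the paper's own proof, which consists only of the one-line citation of Theorem \ref{polycyclic3} and Lemma \ref{quotient}. Your write-up usefully makes explicit the two steps the paper leaves implicit: the reduction, via Kropholler's identification of soluble $\FP_\infty$ groups with constructible soluble groups, to a finite-index nilpotent-by-abelian subgroup, and the verification of the hypotheses of Theorem \ref{polycyclic3} by applying Lemma \ref{quotient} once to $G$ and once to each centralizer.
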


\begin{proof} By \cite[Corollary of Theorem B]{kropholler93b}, any soluble group of type $\FP_{\infty}$ is constructible and virtually of type $\FP$. This implies that the group has finite Pr\"ufer rank and therefore it is virtually abelian-by-nilpotent (see \cite[10.38]{robinson}).  Then the result follows directly from Theorem \ref{polycyclic3} and Lemma \ref{quotient}.
\end{proof}

In the following theorem a stronger condition on $G$ is imposed at the expense of relaxing the conditions on the centralizers. Note that we have not included that every quotient of $G$ is of type $\FP_3$ but this holds by one of the main results of \cite{GKR}.

\begin{thm} \label{FP3} Let $G$ be an abelian-by-polycyclic group of type $\FP_3$ such that the centraliser $C_G(g)$ is finitely generated for every element $g \in A$, where $A$ is a normal abelian  subgroup of $G$ with $G/A$ polycyclic. Then $G$ is polycyclic.
\end{thm}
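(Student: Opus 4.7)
The plan is to reduce the theorem to Theorem \ref{polycyclic3}, via the two main results of \cite{GKR}: (i) $G$ is nilpotent-by-abelian-by-finite, and (ii) every quotient of $G$ is of type $\FP_3$. From (i), there is a normal finite-index subgroup $H\triangleleft G$ together with a normal nilpotent subgroup $N\triangleleft H$ such that $H/N$ is abelian. By taking the normal core and choosing $N$ with care (e.g.\ as a sufficiently large canonical nilpotent normal subgroup of $H$, such as the Fitting subgroup), we may further arrange that $N\triangleleft G$ and that $A\cap H\leq N$. Since polycyclicity is preserved under finite-index inclusions in both directions, it suffices to show that $H$ is polycyclic.

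I would then apply Theorem \ref{polycyclic3} to the short exact sequence $N\mono H\epi H/N$. The conditions that $H$ is finitely generated, $N$ is nilpotent, and $H/N$ is abelian are immediate. The condition that every quotient of $H/Z(N)$ is of type $\FP_2$ follows from (ii) by a normal-core trick: given a normal subgroup $M$ of $H$ containing $Z(N)$, set $M^G=\bigcap_{g\in G}M^g$. Then $M^G$ is normal in $G$ with $[M:M^G]<\infty$, so by (ii) $G/M^G$ is of type $\FP_3$; its finite-index subgroup $H/M^G$ is then $\FP_3$, and the quotient $H/M\cong (H/M^G)/(M/M^G)$ by a finite normal subgroup is of type $\FP_2$.

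The remaining hypothesis of Theorem \ref{polycyclic3} is the most delicate: for every $g\in N$, the centralizer $C_H(g)$ must be finitely generated, and its image in every quotient of $H/Z(N)$ must be of type $\FP_2$. Our hypothesis yields this directly only for $g\in A$. For $g\in A\cap H\leq N$, the centralizer $C_H(g)=C_G(g)\cap H$ has finite index in the finitely generated group $C_G(g)$, hence is itself finitely generated; the image-in-quotient condition for such $g$ follows from a similar normal-core argument applied to $C_G(g)$, using that $A\leq C_G(g)$ and that $C_G(g)/A\leq G/A$ is polycyclic.

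The main obstacle is to extend this centralizer condition from $g\in A\cap H$ to arbitrary $g\in N$. The plan is to induct on the nilpotency class of $N$, exploiting that $[N,A]\leq A\cap N$ (since $A$ is normal abelian): this controls how elements of $N$ interact with elements of $A\cap H$ and lets us relate centralizers of elements of $N$ to centralizers of elements of $A$, using the nilpotent structure on $N$ and reducing by quotienting successively by $Z(N)$. This bridging step is the central technical ingredient that enables the passage from the $A$-hypothesis of Theorem \ref{FP3} to the $N$-hypothesis of Theorem \ref{polycyclic3}. Once it is in place, Theorem \ref{polycyclic3} gives that $H$ is polycyclic, and hence so is $G$.
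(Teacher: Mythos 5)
Your reduction to Theorem \ref{polycyclic3} founders on exactly the step you flag as ``the most delicate'': Theorem \ref{polycyclic3} needs $C_H(g)$ finitely generated (and its images in quotients of $H/Z(N)$ of type $\FP_2$) for \emph{every} $g$ in the nilpotent radical $N$, whereas the hypothesis of Theorem \ref{FP3} only controls centralizers of elements of the abelian normal subgroup $A$. You do not prove the bridging step; you only announce a plan (``induct on the nilpotency class of $N$, exploiting that $[N,A]\leq A\cap N$\dots''), and no argument is given for why the centralizer of an element $g\in N\setminus A$ should inherit finite generation from centralizers of elements of $A$. Since this passage is precisely where all the difficulty of the theorem lives, the proposal as written is not a proof. (The secondary conditions are also treated too lightly: for $g\in A$ you assert that the image of $C_G(g)$ in any quotient of $G/Z(N)$ is of type $\FP_2$ ``by a similar normal-core argument,'' but finite generation of $C_G(g)$ plus polycyclicity of $C_G(g)/A$ does not by itself give $\FP_2$ for these images.)

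The paper avoids this obstruction entirely: it never invokes Theorem \ref{polycyclic3} for this result. Instead it proves directly that $A$ is finitely generated as an abelian group, which immediately forces $G$ to be polycyclic. After passing to a finite-index nilpotent-by-abelian subgroup (via \cite{GKR}), the commutator subgroup $H'$ of $H=G/A$ acts nilpotently on $A$, giving a finite filtration $A_i=A\Omega^i$ by $\Z H$-submodules whose layers $A_i/A_{i+1}$ are finitely generated modules over $\Z Q$ with $Q=H/H'$ finitely generated abelian. For each layer one picks, as in the proof of Theorem \ref{polycyclic3}, an element $a\in A$ lifting the ``diagonal unit'' of $\bigoplus_j\Z Q/P_j$ for the minimal associated primes $P_j$; the hypothesis that $C_G(a)$ is finitely generated (note $a\in A$, so the hypothesis applies) forces each $\Z Q/P_j$ to be finitely generated additively, and Lemma \ref{commutativealgebra} then gives that the layer, hence inductively $A$, is finitely generated. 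In other words, the associated-primes trick is applied to the filtration of $A$ itself rather than to $Z(N)$, so only centralizers of elements of $A$ are ever needed. If you want to salvage your route, you would have to either supply the missing bridge for all $g\in N$, or switch to arguing directly on $A$ as the paper does.
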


\begin{proof} We aim to prove that $A$ is finitely generated as a group. We view $A$ as a $\Z H$-module with $H$-action induced by conjugation, where $H = G / A$. By \cite{GKR} $G$ is nilpotent-by-abelian-by-finite. By going down to a subgroup of finite index we can assume that $G$ is nilpotent-by-abelian, so the commutator $H'$ acts nilpotently on $A$ i.e. there is some natural number $s$ such that $A \Omega^s = 0$ where $\Omega$ is the augmentation ideal of $\Z H'$. Then $A$ has a filtration of $\Z H$-modules
$$
0 = A_s = A \Omega^s \subseteq \ldots \subseteq A_{i+1} = A \Omega^{i+1} \subseteq A_i = A \Omega^i \subseteq \ldots \subseteq A_0 = A
$$
where $H'$ acts trivially on
$A_i / A_{i+1}$ via conjugation i.e. $A_i / A_{i+1}$ is a $\Z Q$-module, where $Q = H / H'$.
Since $A$ is a finitely generated module over a Noetherian ring $\Z H$, we have that every $A_i / A_{i+1}$ is a finitely generated as a $\Z Q$-module.

Suppose that $A_{i+1}$ is finitely generated as an additive group. We will prove that $A_i$ is finitely generated as an additive group. We follow the method used in the proof of Theorem \ref{class2}.
Let $P_1, \ldots, P_s$ be the minimal associated primes for the $\Z Q$-module $A_i / A_{i+1}$, thus $M = \Z Q/ P_1 \oplus \ldots \oplus \Z Q / P_s $ is a submodule of  $A_i / A_{i+1}$.

Let $\tilde{a}$ be the element of $M$ whose projection to $\Z Q / P_j$ is the unity element for every $1 \leq j \leq s$ and $a$ be a preimage of $\tilde{a}$ in $A$.
  Note that $A \subseteq C_G(a)$  and define $H_0 = C_G(a) / A \subseteq H$.
  Since $C_G(a)$ is finitely generated $A$ is finitely generated as a $\Z H_0$-module, so $A_i / A_{i+1}$ is finitely generated as a $\Z Q_0$-module and so $M$ is finitely generated as a $\Z Q_0$-module, where $Q_0 = H_0 / (H_0 \cap H')$. By the choice of $a$ we get that $Q_0$ acts trivially on $M$, thus $M$ is finitely generated as an additive group and by Lemma \ref{commutativealgebra} $A_i / A_{i+1}$ is finitely generated as an additive group.

\end{proof}

\section{Conjugacy classes}

\noindent
Let $T$ and $G$ be groups with $T$ acting on $G$. We consider the
following condition:

\begin{itemize}
\item[(1T)] $G$ has the maximal condition for virtually cyclic subgroups (max-vc)
and only finitely many $T$-orbits of maximal virtually cyclic
subgroups.
\end{itemize}

\noindent
If $G=T$ acts on itself by conjugation, then (1T) is equivalent to condition (1) of Section 2.
Note also that max-vc is subgroup closed and that for any $x\in T$ and any $H\leq G$ maximal virtually
cyclic, the group $H^x$ is also maximal virtually cyclic.

\begin{lemma}\label{infprim} Let $h(x)\in\Z[x]$ be an integer
polynomial. There are infinitely many primes $p$ such that there
exists an integer $n_p$ with
$$p|h(n_p).$$
\end{lemma}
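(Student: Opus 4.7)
The plan is a Euclid-style argument by contradiction, essentially the standard proof of the infinitude of primes adapted to polynomial values. Note first that the statement tacitly requires $h$ to be non-constant: a nonzero constant polynomial would be a counterexample. The easy degenerate cases I would dispose of immediately: if $h\equiv 0$ any prime works, and if $h(0)=0$ then $x\mid h(x)$ in $\Z[x]$, so $p\mid h(p)$ for every prime $p$. Thus I would reduce to the case $\deg h\geq 1$ and $a_0:=h(0)\neq 0$.

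Next, suppose for contradiction that a finite list of primes $p_1,\ldots,p_k$ exhausts all primes that divide some value $h(n)$ with $n\in\Z$. The trick is to evaluate $h$ at an integer cooked up so that $h$ takes a value coprime to every $p_i$ except for the nuisance factor $a_0$. Write $h(x)=a_0+xq(x)$ with $q\in\Z[x]$, and set $N_m:=a_0\,p_1\cdots p_k\,m$ for a positive integer parameter $m$. Then
$$
h(N_m)\;=\;a_0+N_m\,q(N_m)\;=\;a_0\bigl(1+p_1\cdots p_k\,m\,q(N_m)\bigr).
$$
Denote the second factor by $M_m$; by construction $\gcd(M_m,p_i)=1$ for each $i$. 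Since $h(N_m)=a_0 M_m$, every prime divisor of $M_m$ divides $h(N_m)$, yet lies outside $\{p_1,\ldots,p_k\}$ — a contradiction provided $|M_m|>1$.

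The one (very minor) obstacle is producing an $m$ with $|M_m|\geq 2$, and this is precisely where $\deg h\geq 1$ is needed: because $|h(N_m)|=|a_0|\cdot|M_m|\to\infty$ as $m\to\infty$, any sufficiently large $m$ suffices. With such an $m$ chosen, $M_m$ admits a prime divisor $q\notin\{p_1,\ldots,p_k\}$ satisfying $q\mid h(N_m)$, contradicting the assumed completeness of the list and proving the lemma.
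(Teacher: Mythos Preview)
Your proof is correct and follows essentially the same Euclid-style contradiction as the paper: assume only finitely many primes $p_1,\ldots,p_k$ occur, evaluate $h$ at a carefully chosen integer so that the resulting value cannot be supported on $\{p_1,\ldots,p_k\}$ alone, and use $|h(m)|\to\infty$ to force a new prime factor. The only differences are cosmetic---the paper controls the $p_i$-adic valuation of $h(m)$ by taking $m$ divisible by $\prod p_i^{n_i}$ (where $p_i^{n_i-1}\parallel h(0)$), whereas you factor out $a_0$ cleanly by evaluating at multiples of $a_0\prod p_i$; your explicit treatment of the degenerate cases $h\equiv 0$ and $h(0)=0$, and your remark that $h$ must be non-constant, are in fact more careful than the paper's version.
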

\begin{dem} Consider prime factors of $h(n)$ when $n\in\Z$. If we had only a
finite set $\{p_1,\ldots,p_r\}$, then for each $i$ let $n_i$ such
that
$$p_i^{n_i-1}|h(0),\quad p_i^{n_i}\not|h(0).$$

Note that for any $m$ with $\prod_{i=1}^rp_i^{n_i}|m$,
$$p_i^{n_i}\not|h(m).$$
So we may choose $m$ big enough so that $h(m)$ has some other prima
factor $q$ not in the finite set above.\end{dem}

\begin{lemma}\label{abelian} Let $A$ be an abelian group and  $T$ be an infinite cyclic group acting on $A$.
Suppose $A$ has property (1T).  Then $A$ is finite.
\end{lemma}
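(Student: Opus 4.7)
The plan is by contradiction: assuming $A$ is infinite, I want to exhibit infinitely many $T$-orbits of maximal virtually cyclic subgroups, contradicting (1T). The decisive input at the end is Lemma \ref{infprim}.

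First I would use max-vc to restrict the structure of $A$. Any infinite torsion abelian group admits a strictly ascending chain of finite (hence virtually cyclic) subgroups, so the torsion subgroup $\tau(A)$ must be finite. Similarly, divisible elements of $A$ would give $\Z[1/p]$- or $\Z(p^{\infty})$-like infinite ascending chains of cyclic subgroups, ruled out by max-vc. Passing to $A/\tau(A)$, which inherits max-vc and (1T), I may assume $A$ is torsion-free, and then the maximal virtually cyclic subgroups of $A$ are precisely the primitive infinite cyclic subgroups $\langle b\rangle$. A further step uses (1T) to bound the torsion-free rank of $A$: if the rank were infinite, primitive cyclic subgroups along $\Z$-linearly independent directions would produce distinct $T$-orbits (since a single $T$-orbit is confined to the $\Z[T]$-span of a single primitive generator, which has bounded rank), contradicting (1T). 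Hence $A\cong\Z^r$ for some $r\ge 1$.

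Now I would invoke Lemma \ref{infprim}. Let $M\in\GL_r(\Z)$ be the matrix of the action of $t$ on $A=\Z^r$, and let $h(x)\in\Z[x]$ be its characteristic polynomial. By Lemma \ref{infprim} applied to a suitable shift of $h$, infinitely many primes $p$ admit an integer $n_p$ with $p\mid h(n_p)$; then $n_p$ is an eigenvalue of $M\pmod p$ acting on $(\F_p)^r$, giving an $\F_p$-eigenvector $v_p$ that can be lifted to a primitive vector $w_p\in A$. The corresponding maximal virtually cyclic subgroups $\langle w_p\rangle$ should then be shown to lie in pairwise distinct $T$-orbits: a $T$-conjugacy $t^k w_p=\pm w_q$ reduces mod $p$ to $n_p^k v_p\equiv\pm\bar w_q\pmod p$, tying $p$ and $q$ arithmetically in a way that can hold only finitely often. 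This yields infinitely many $T$-orbits, contradicting (1T).

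The main obstacle will be the final step: carefully verifying the distinctness of $T$-orbits across the primes produced by Lemma \ref{infprim}. The mod-$p$ eigenvalue attached to a primitive $w_p$ is the natural invariant, but one must translate this modular information back into a geometric statement about orbits in $A$ itself, tracking the primitive lifts consistently across the infinite family of primes. The low-rank cases (especially $A\cong\Z$) are delicate and would require either a separate ad hoc argument or reliance on the ambient hypotheses in the application of the lemma inside the proof of Theorem \ref{quotientnaf}.
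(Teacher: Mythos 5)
Your overall strategy --- reduce to a finitely generated torsion-free $A$ and then use Lemma \ref{infprim} to manufacture infinitely many $T$-orbits of maximal cyclic subgroups --- is the paper's strategy, but two steps have genuine gaps. The first is your finite-rank reduction: you justify it by saying a $T$-orbit is confined to the $\Z T$-span of one primitive generator, ``which has bounded rank.'' That is false: a cyclic $\Z T$-module can have infinite rank as an abelian group ($\Z T=\Z[t,t^{-1}]$ itself, with $T$ acting by multiplication by $t$, is generated by one element, is torsion-free and satisfies max-vc). Finiteness of the number of orbits does make $A$ a finitely generated $\Z T$-module and lets one reduce to the cyclic case $A=\Z T/I$, but to bound the rank one must then show $I\neq m\Z T$ (in particular $I\neq 0$), which the paper does by observing that $\Z T/m\Z T$ already has infinitely many $T$-orbits of maximal cyclic subgroups --- i.e.\ this step is itself an instance of the phenomenon you are trying to prove and cannot be waved through.

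The second, decisive gap is the distinctness of the orbits of your subgroups $\langle w_p\rangle$, which you acknowledge but whose proposed repair does not work. A lift $w_p$ of an $\F_p$-eigenvector is constrained only modulo $p$, so nothing relates $w_p$ to $w_q$ for $p\ne q$; indeed if the matrix $M\in\GL_r(\Z)$ has an integral eigenvector $w$ (eigenvalue $\pm1$) you may take $w_p=w$ for every $p$ and obtain a single orbit. Reducing a hypothetical relation $t^kw_p=\pm w_q$ modulo $p$ only says that $w_q$ is an $n_p$-eigenvector mod $p$, which is no contradiction. The paper's resolution is to stay inside the cyclic module $\Z T/(h(t))$ and use the canonical elements $t-n_p$: if $\langle t-n_p+I\rangle$ and $\langle t-n_q+I\rangle$ are $T$-conjugate then the ideals $(t-n_p,h(t))$ and $(t-n_q,h(t))$ coincide, each quotient $\Z T/(t-n_p,h(t))\cong\Z/h(n_p)\Z$ is finite so only finitely many maximal ideals can contain any of these ideals, and yet the maximal ideals $(t-n_p,h(t),p)$ are pairwise distinct because each contains a different rational prime. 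This ideal-theoretic counting is the key idea missing from your sketch; your closing remark that the case $A\cong\Z$ is delicate is apt, but it does not supply it.
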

\begin{proof} Note first that the condition (1T) implies that the
order of the torsion elements of $A$ is bounded. Therefore there are
only finitely many primes $p$ for which the $p$-primary component of
$A$ is non-trivial, so by splitting $A$ in its torsion free part and
its $p$-primary components we may assume $A$ is either torsion free
or that all its elements have $p$-power order for a fixed prime $p$.

 If we denote $a_1,\ldots, a_s$ for representatives
of the $T$-orbits of maximal cyclic subgroups in $A$, then there is
an epimorphism
$$\Z T\oplus\buildrel s\over\ldots\oplus\Z T\twoheadrightarrow A$$
and each $\Z Ta_i\leq A$ has also (1T), so we may assume $$A=\Z
Ta\cong\Z T/I$$ for certain ideal $I\normal\Z T.$ Let $m\Z=I\cap\Z$
(the assumption above implies that $m$ is either 0 or a power of
$p$). As $A=\Z T/I$ has only finitely many $T$-orbits of maximal
virtually cyclic subgroups, $I\neq m\Z T$. Therefore we may choose
some polynomial
$$f(t)\in I\cap\Z[t]\setminus m\Z[t]$$
of minimal degree $k$ where $t$ is a fixed generator of $T$. Note
that $k>0$. Then the family $\{1,t,t^2,\ldots,t^{k-1}\}$ yields a
maximal independent subset of the quotient $A=\Z T/I$. Therefore $A$
has finite Pr\"ufer rank so max-vc implies that it is a finitely
generated abelian group. In the torsion case we get the result so
from now on we assume $A$ is torsion free.

There is a principal ideal $I_1\normal\Z T$ such that $I\leq I_1$
and $I_1/I$ is torsion.
Indeed $I \otimes_{\Z} {\Q}$ is an ideal of the principal ideal domain $\Q T$, hence is generated by some element $h(t)$. In fact $h(t)$ can be chosen to be a primitive polynomial in the variable $t$. A localisation version of Gauss' lemma implies $\Z T \cap (\Q T h(t)) = \Z T h(t)$, and we can define $I_1 = \Z T h(t)$.
So our assumption that $A$ is torsion free implies that
$I=\Z T h(t)$.

Now, by Lemma \ref{infprim} there is an  infinite set of primes
$\Omega$ such that for each $p\in\Omega$ there exists an integer
$n_p$ with
$$t-n_p|h(t)\text{ mod }p.$$ Any of the
elements $t-n_p+I$ generates a maximal cyclic subgroup of $A$. So
our assumption on $A$ implies that there are only finitely many
$T$-orbits between them. And therefore there are only finitely many
distinct ideals in the set
$$\{I_p=\Z T(t-n_p,h(t)):p\in\Omega\}.$$
Moreover,
$$\Z T/I_p\cong\Z/h(n_p)\Z$$
is finite so there are only finitely many maximal ideals containing
some ideal in the set above.

But for each prime $p\in\Omega$,
$$\Z T(t-n_p,h(t))\leq M_p=\Z T(t-n_p,h(t),p)$$
which is a maximal ideal and obviously $M_p\neq M_q$ for $p\neq q.$
This is a contradiction.
\end{proof}

\noindent From now on let $G=T$ act on itself by conjugation and we shall consider the conditions (1) and (2) of Section 2.

\begin{lemma}\label{finindex} Let $H\normal G$ be a finite index normal subgroup.
If $G$ has property (1), then so does $H$.\end{lemma}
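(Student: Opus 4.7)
The plan is to verify the two parts of condition (1) for $H$ separately. For max-vc, I would simply observe that the maximal condition on virtually cyclic subgroups is subgroup-closed: any ascending chain of virtually cyclic subgroups of $H$ is an ascending chain of virtually cyclic subgroups of $G$, and so stabilizes. The substantive step is to bound the number of $H$-conjugacy classes of maximal virtually cyclic subgroups of $H$.

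Let $V_1,\ldots,V_n$ be representatives of the $G$-conjugacy classes of maximal virtually cyclic subgroups of $G$. Given an arbitrary maximal virtually cyclic subgroup $K$ of $H$, I would first use max-vc in $G$ to embed $K$ in a maximal virtually cyclic subgroup $V$ of $G$. Then $V\cap H$ is virtually cyclic (as a subgroup of $V$), sits inside $H$, and contains $K$; so maximality of $K$ in $H$ forces $K=V\cap H$. Writing $V=gV_ig^{-1}$ for some $g\in G$ and some $i$, and using that $H$ is normal in $G$, I then get
\[
K \;=\; gV_ig^{-1}\cap H \;=\; g\bigl(V_i\cap g^{-1}Hg\bigr)g^{-1} \;=\; g(V_i\cap H)g^{-1}.
\]
Hence every maximal virtually cyclic subgroup of $H$ is $G$-conjugate to one of the finitely many subgroups $V_1\cap H,\ldots,V_n\cap H$.

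The final step is a routine passage from $G$-conjugacy to $H$-conjugacy: for any subgroup $W\leq G$, the $G$-conjugates of $W$ break into $|H\backslash G/N_G(W)|$ many $H$-conjugacy classes, and this double-coset cardinality is bounded by $[G:H]<\infty$. Applying this with $W=V_i\cap H$ for each $i$, I conclude that the maximal virtually cyclic subgroups of $H$ fall into at most $n\,[G:H]$ many $H$-conjugacy classes, so $H$ has property (1). There is no real obstacle here; the only point requiring care is that $K$ needs to coincide with $V\cap H$ (not merely be contained in it), which is why one first enlarges $K$ using max-vc in $G$ before intersecting with $H$.
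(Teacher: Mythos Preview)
Your argument is correct and follows essentially the same route as the paper: enlarge a maximal virtually cyclic $K\leq H$ to a maximal virtually cyclic $V\leq G$ via max-vc, deduce $K=V\cap H$ by maximality, and then use finiteness of the index to split each $G$-conjugacy class into finitely many $H$-conjugacy classes. The paper phrases the last step via the finite index $|G:N_G(V)H|$ rather than via $|H\backslash G/N_G(V_i\cap H)|$, but since $H\normal G$ gives $V^g\cap H=(V\cap H)^g$ and $N_G(V)\leq N_G(V\cap H)$, the two counts are compatible and both bounded by $[G:H]$.
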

\begin{proof} Clearly, $H$ has max-vc. Moreover for any $C\leq H$
maximal virtually cyclic there is some $D\leq G$ also maximal
virtually cyclic with $C\leq D$ and from this follows that $D\cap
H=C.$ As the index $|G:N_G(D)H|$ is finite, the $G$-conjugacy class
of $D$ splits after intersecting with $H$ as finitely many
$H$-conjugacy classes of maximal virtually cyclic subgroups of $H$.
\end{proof}

\begin{lemma}
\label{quotients1} Assume that $G$ has (1) and there is some
$N\normal G$ with $G/N$ finitely generated abelian. Then $G/N$ is
virtually cyclic.
\end{lemma}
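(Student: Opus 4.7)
The plan is to show that if $G/N$ is finitely generated abelian of torsion-free rank $r$, then $r\le 1$, by producing infinitely many $G$-conjugacy classes of maximal virtually cyclic subgroups whenever $r\ge 2$.

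First I would reduce to the torsion-free abelian quotient. Write $G/N\cong \Z^r\oplus F$ with $F$ finite, and let $G^\ast\le G$ be the preimage of $\Z^r$. Then $G^\ast$ has finite index in $G$, so by Lemma \ref{finindex} it still satisfies property (1), and since $G/N$ is virtually cyclic iff $G^\ast/N$ is, it suffices to prove the statement under the additional hypothesis $G/N\cong\Z^r$. Assume for contradiction that $r\ge 2$, and fix $t_1,t_2\in G$ whose images $\bar t_1,\bar t_2$ in $G/N$ can be completed to a $\Z$-basis of $\Z^r$.

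Next, for every coprime pair of integers $(m,n)$ the element $\bar t_1^{\,m}\bar t_2^{\,n}$ is a primitive vector in $\Z^r$. Using max-vc, choose a maximal virtually cyclic subgroup $H_{m,n}\le G$ containing $\langle t_1^{\,m}t_2^{\,n}\rangle$. The image $\bar H_{m,n}$ in $G/N$ is a virtually cyclic subgroup of the free abelian group $\Z^r$, hence is cyclic of rank at most $1$. But a cyclic subgroup of $\Z^r$ that contains a primitive vector $v$ must equal $\Z v$, so
\[
\bar H_{m,n}=\Z\bigl(\bar t_1^{\,m}\bar t_2^{\,n}\bigr).
\]

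Now I would exploit abelianness of $G/N$: conjugation in $G$ induces the identity on $G/N$, so whenever $H_{m,n}$ and $H_{m',n'}$ are $G$-conjugate, their images in $G/N$ coincide, forcing $\Z(\bar t_1^{\,m}\bar t_2^{\,n})=\Z(\bar t_1^{\,m'}\bar t_2^{\,n'})$ and hence $(m',n')=\pm(m,n)$. Since there are infinitely many coprime pairs $(1,n)$, $n\in\Z$, the subgroups $H_{1,n}$ fall into infinitely many $G$-conjugacy classes of maximal virtually cyclic subgroups, contradicting property (1). Therefore $r\le 1$ and $G/N$ is virtually cyclic.

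The only mildly delicate point is the primitivity step: identifying $\bar H_{m,n}$ exactly as $\Z(\bar t_1^{\,m}\bar t_2^{\,n})$. Once that is in hand, the argument is just the pigeonhole observation together with the triviality of conjugation in an abelian quotient. The rest is bookkeeping, facilitated by Lemma \ref{finindex} to strip off the finite torsion part of $G/N$ at the start.
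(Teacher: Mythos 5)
Your proof is correct and follows essentially the same route as the paper: reduce to a torsion-free quotient via Lemma \ref{finindex}, lift maximal cyclic subgroups of $G/N$ to maximal virtually cyclic subgroups of $G$, and use that conjugation acts trivially on the abelian quotient to convert finitely many conjugacy classes into finitely many maximal cyclic subgroups of $G/N$. The only difference is presentational: you argue by contradiction with an explicit infinite family of primitive vectors $\bar t_1^{\,m}\bar t_2^{\,n}$, whereas the paper states directly that a finitely generated torsion-free abelian group with finitely many maximal cyclic subgroups is cyclic.
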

\begin{proof} By Lemma \ref{finindex} we may assume that $G/N$ is torsion
free. Note that for any $\bar C=<\bar x>\leq G/N$ maximal cyclic we
may choose an $x\in G$ with $xN=\bar x$ and a maximal virtually
cyclic $C_x$ of $G$ with $x\in C_x.$ And $C_x^g=C_y$ implies
$x^gN=yN.$ Therefore there are finitely many conjugacy classes of
maximal cyclic subgroups of $G/N$. As this is torsion free abelian
we deduce that $G/N$ is cyclic.
\end{proof}

\begin{lemma}\label{quotients2} Assume that $G$ has property (1) and there is $N\normal G$  finite.
Then $G/N$ has property (1).
\end{lemma}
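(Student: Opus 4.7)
The plan is to show that each ingredient of property (1) transfers from $G$ to $G/N$ by lifting/pushing along the projection $\pi:G\epi G/N$, using crucially that $N$ is finite (and normal).

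First I would establish the key preliminary fact: the preimage under $\pi$ of any virtually cyclic subgroup of $G/N$ is again virtually cyclic. If $\bar H\leq G/N$ is virtually cyclic and $H=\pi^{-1}(\bar H)$, then $N\normal H$ with $N$ finite and $H/N\cong\bar H$ virtually cyclic. Replacing $H$ by a finite-index subgroup we may assume $H/N$ is infinite cyclic, generated by $xN$ for some $x\in H$. Then $H=N\langle x\rangle$, and by the second isomorphism theorem $[H:\langle x\rangle]=[N:N\cap\langle x\rangle]\leq |N|<\infty$, so $H$ is virtually cyclic. Conversely, a quotient of a virtually cyclic group is virtually cyclic.

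Next, max-vc for $G/N$ is immediate: any ascending chain $\bar H_1\leq\bar H_2\leq\cdots$ of virtually cyclic subgroups in $G/N$ lifts to an ascending chain $\pi^{-1}(\bar H_1)\leq\pi^{-1}(\bar H_2)\leq\cdots$ of virtually cyclic subgroups of $G$ by the previous paragraph, which must stabilize by max-vc in $G$; the original chain therefore stabilizes as well.

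It remains to bound the number of $G/N$-conjugacy classes of maximal virtually cyclic subgroups. Let $\bar C\leq G/N$ be such a subgroup and set $C=\pi^{-1}(\bar C)$, so $N\leq C$ and $C$ is virtually cyclic in $G$. Pick a maximal virtually cyclic $D\leq G$ containing $C$; then $D\supseteq N$, and the quotient $D/N\leq G/N$ is virtually cyclic and contains $\bar C$. By maximality of $\bar C$ we conclude $\bar C=D/N$. Now let $D_1,\ldots,D_k$ be representatives of the finitely many $G$-conjugacy classes of maximal virtually cyclic subgroups of $G$; because $N$ is normal, the property $N\leq D_i^g$ is equivalent to $N\leq D_i$, so the previous argument shows every maximal virtually cyclic subgroup of $G/N$ is $G/N$-conjugate to some $D_i/N$ with $N\leq D_i$. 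This gives at most $k$ conjugacy classes in $G/N$, completing the proof.

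The only mild subtlety is step one (a finite-by-virtually-cyclic extension is virtually cyclic); once that is in hand the rest is a clean book-keeping argument, with the normality of $N$ being precisely what allows conjugation upstairs to descend to conjugation downstairs.
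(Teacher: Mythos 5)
Your proposal is correct and rests on exactly the same key observation as the paper's (one-line) proof, namely that for $N\leq C\leq G$ with $C/N$ virtually cyclic and $N$ finite, $C$ is itself virtually cyclic; the paper leaves the remaining bookkeeping (transfer of max-vc and of conjugacy classes of maximal virtually cyclic subgroups) to the reader, and you have simply carried it out correctly.
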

\begin{proof} For any $N\leq C\leq G$ such that $C/N$ is virtually cyclic, $C$ itself is virtually
cyclic. This yields the result.
\end{proof}

\begin{thm} \label{quotientnaf} Let $G$ be finitely generated
 nilpotent-by-abelian-by-finite with property (1). Then $G$ is virtually cyclic.
\end{thm}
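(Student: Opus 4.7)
The plan is to first reduce $G$ to the form nilpotent-by-infinite-cyclic (or nilpotent), and then induct on the nilpotency class of the nilpotent normal subgroup, with Lemma \ref{abelian} applied to its centre serving as the analytic input. Using Lemma \ref{finindex} I would pass to a normal subgroup of finite index and assume $G$ is nilpotent-by-abelian, with $N\triangleleft G$ nilpotent and $G/N$ finitely generated abelian. Lemma \ref{quotients1} then gives $G/N$ virtually cyclic, and a further application of Lemma \ref{finindex} (taking the preimage of a $\Z$-summand) reduces to $G/N$ being either infinite cyclic or trivial. The induction is on the nilpotency class $c$ of $N$; the case $c=0$ is immediate.

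For the inductive step, suppose first that $G/N=T$ is infinite cyclic. Set $Z=Z(N)$, which is characteristic in $N$ and hence normal in $G$; since $N$ centralises $Z$, the $G$-action on $Z$ factors through $T$. The central verification is that $Z$ satisfies property (1T): max-vc is inherited from $G$, and if $M_1,\dots,M_k$ are conjugacy representatives of the maximal virtually cyclic subgroups of $G$, then any maximal virtually cyclic $C\leq Z$ lies in some $M_i^g$, whence $C^{g^{-1}}\leq M_i\cap Z$; since $M_i\cap Z$ is virtually cyclic inside $Z$ and $C^{g^{-1}}$ is maximal there, one obtains $C^{g^{-1}}=M_i\cap Z$. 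Because $N$ acts trivially on $Z$, $G$-conjugacy on subgroups of $Z$ coincides with $T$-conjugacy, so $Z$ has at most $k$ $T$-orbits of maximal virtually cyclic subgroups. Lemma \ref{abelian} then forces $Z$ to be finite, Lemma \ref{quotients2} shows $G/Z$ still satisfies property (1), and since $N/Z$ has class $c-1$, the inductive hypothesis applied to $G/Z$ completes this case.

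It remains to handle $G/N$ trivial, i.e.\ $G=N$ finitely generated nilpotent. I would apply Lemma \ref{quotients1} with the normal subgroup $[G,G]$ to see that $G^{\mathrm{ab}}$ is virtually cyclic. If $G^{\mathrm{ab}}$ is finite then $G$ itself is finite (a standard fact for finitely generated nilpotent groups), and we are done. Otherwise, passing via Lemma \ref{finindex} to the preimage of the $\Z$-summand of $G^{\mathrm{ab}}$ yields a finite index subgroup $G_0$ in which $[G,G]$ is a nilpotent normal subgroup of class $\leq c-1$ with infinite cyclic quotient; this is the previous sub-case at strictly smaller nilpotency class, and induction closes the argument.

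The main obstacle is the orbit-counting step showing that property (1) for $G$ passes to property (1T) for $Z(N)$ under the induced $T$-action; everything else is organised reduction via Lemmas \ref{finindex}, \ref{quotients1}, \ref{quotients2}, and \ref{abelian}.
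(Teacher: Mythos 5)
Your argument is correct and follows essentially the same route as the paper: reduce to nilpotent-by-abelian via Lemma \ref{finindex}, control the abelian quotient with Lemma \ref{quotients1}, and then induct on the nilpotency class by applying Lemma \ref{abelian} to $Z(N)$ and Lemma \ref{quotients2} to pass to $G/Z(N)$. Your additional care in verifying property (1T) for $Z(N)$ and in splitting off the case where $G/N$ is finite (so that Lemma \ref{abelian}, which requires the acting group to be infinite cyclic, genuinely applies) only fills in details the published proof leaves implicit.
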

\begin{proof} By Lemma \ref{finindex} we may assume $G$ is nilpotent-by-abelian.
 Let $N\normal G$ be nilpotent with $G/N$ abelian. As $T=G/N$ is finitely generated by
 Lemma \ref{quotients1} it must be
 cyclic. Let $A=Z(N).$ Then $G/A$ acts on $A$ and $N/A$ is in the
 kernel of the action. Moreover the condition (1) implies that $A$
 has property (1T) with respect to the group $T = G / N$. So by Lemma \ref{abelian} $A$ must
 be finite. And Lemma \ref{quotients2} implies that $G/A$ also has (1) so an induction on the
 nilpotency length of $N$ yields the result.
 \end{proof}

\begin{cor} \label{conjclassesAP} Let $G$ be an abelian-by-polycyclic-by-finite group of type $\FP_3$ and with property (1). Then $G$ is virtually cyclic.
\end{cor}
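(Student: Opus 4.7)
The plan is to reduce the corollary to Theorem \ref{quotientnaf} by invoking the Groves--Kropholler--Robinson structure theorem \cite{GKR} that is already used in the proof of Theorem \ref{FP3}.

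First I will choose a finite-index normal subgroup $G_0$ of $G$ that is abelian-by-polycyclic. Writing $A \normal G$ for the abelian normal subgroup with $G/A$ polycyclic-by-finite, I take $G_0$ to be the preimage in $G$ of a polycyclic normal subgroup of finite index in $G/A$. Since $G$ is of type $\FP_3$ and $[G:G_0]<\infty$, the subgroup $G_0$ is also of type $\FP_3$. Applying \cite{GKR} to $G_0$, exactly as in the proof of Theorem \ref{FP3}, yields that $G_0$ is nilpotent-by-abelian-by-finite. Intersecting the $G$-conjugates of a nilpotent-by-abelian finite-index subgroup of $G_0$ then produces a nilpotent-by-abelian normal subgroup of finite index in $G$, so $G$ itself is nilpotent-by-abelian-by-finite.

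Next I observe that $G$ is finitely generated, being of type $\FP_3$, and that $G$ satisfies property (1) by hypothesis. Thus all the hypotheses of Theorem \ref{quotientnaf} are met, and that theorem gives at once that $G$ is virtually cyclic.

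There is essentially no new obstacle at this stage: the substantial inputs, namely the \cite{GKR} structure theorem and Theorem \ref{quotientnaf}, have already been assembled, so only routine verifications remain. These are that type $\FP_3$ and property (1) both survive passage to a finite-index subgroup (the latter via Lemma \ref{finindex}), and that the class of nilpotent-by-abelian-by-finite groups is closed under finite extensions. The most delicate of these is arguably Lemma \ref{finindex}, but that has already been proved in Section 4.
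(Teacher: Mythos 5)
Your proposal is correct and follows essentially the same route as the paper: invoke the result of \cite{GKR} that abelian-by-polycyclic groups of type $\FP_3$ are nilpotent-by-abelian-by-finite, and then apply Theorem \ref{quotientnaf}. You are in fact a little more careful than the paper's one-line proof, in that you explicitly pass to an abelian-by-polycyclic finite-index normal subgroup before applying \cite{GKR} and then check that nilpotent-by-abelian-by-finite is inherited by the finite overgroup, which is a worthwhile (if routine) clarification.
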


\begin{proof} By \cite{GKR} abelian-by-polycyclic groups of type $\FP_3$ are nilpotent-by-abelian-by-finite and we can apply the previous Theorem.
\end{proof}

\begin{cor} \label{conjclassesPS} Let $G$ be polycyclic-by-finite group with property (2). Then $G$ is virtually cyclic.
\end{cor}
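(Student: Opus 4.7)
The proof will be a direct invocation of Theorem \ref{quotientnaf}, with property (2) first upgraded to property (1). Since $G$ is polycyclic-by-finite, every subgroup of $G$ is itself polycyclic-by-finite and in particular finitely generated; thus $G$ satisfies max on all subgroups, and so \emph{a fortiori} max-vc. Combined with the hypothesis that $G$ has property (2), Lemma \ref{max} immediately yields that $G$ has property (1). Of course $G$ is finitely generated as well.

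It remains to verify that $G$ is nilpotent-by-abelian-by-finite, so as to fit the hypotheses of Theorem \ref{quotientnaf}. This is a classical structural fact about polycyclic-by-finite groups, usually attributed to Mal'cev: after passing to a polycyclic subgroup of finite index, one can further pass to a finite index subgroup whose derived group is nilpotent (equivalently, every polycyclic group is virtually nilpotent-by-abelian, e.g.\ by triangulating a faithful integral representation via the Auslander embedding). Hence $G$ is nilpotent-by-abelian-by-finite.

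With these two reductions in hand, Theorem \ref{quotientnaf} applies verbatim and concludes that $G$ is virtually cyclic. The only non-routine ingredient is the Mal'cev-type structure theorem invoked in the second step; everything else is bookkeeping that repackages property (2) into the setup developed in the previous section.
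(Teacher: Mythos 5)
Your proof is correct and follows essentially the same route as the paper: polycyclic-by-finite gives max, hence max-vc, so Lemma \ref{max} upgrades (2) to (1), and Theorem \ref{quotientnaf} finishes. You are merely more explicit than the paper in citing Mal'cev's theorem to justify that polycyclic-by-finite groups are nilpotent-by-abelian-by-finite, a step the paper leaves implicit.
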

\begin{proof} As $G$ is polycyclic-by-finite, it has max and hence max-vc. Therefore by Lemma
\ref{max} $G$ has (1). Moreover, polycyclic groups are of type $\FP_\infty$ (\cite[2.6]{Bieribook}) so as in Corollary \ref{centraliserFP} our group $G$ is virtually nilpotent-by-abelian. Now it suffices to apply Theorem \ref{quotientnaf}.
\end{proof}

\section{The main results}


\begin{thm} \label{mainresult}
Soluble groups of type $\UUFP_\infty$ such that centralizers of infinite order elements are of type $\FP_\infty$ are virtually cyclic.
\end{thm}

\begin{proof} Proposition \ref{uufpn} and Corollary \ref{centraliserFP} imply that the group is polycyclic and satisfies property (2). Now apply  Corollary \ref{conjclassesPS}.
\end{proof}

\begin{thm} Let $G$ be an abelian-by-nilpotent group of type $\UUFP_{~1}$ such that centralizers of infinite order elements are finitely generated. Then $G$ is virtually cyclic.
\end{thm}

\begin{proof} By  Proposition \ref{uufpn} centralizers of arbitrary elements are finitely generated. \cite[Thm.~C]{Lennox2} implies $G$ is polycyclic. Now apply Corollary \ref{conjclassesPS}.
\end{proof}

\begin{thm} Let $G$ be a (nilpotent of class 2)-by-abelian group of type $\UUFP_{~2}$ such that centralizers of infinite order elements are of type $\FP_2$. Then $G$ is virtually cyclic.
\end{thm}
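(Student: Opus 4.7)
The plan is to assemble three results already proved in the paper. First, I would invoke Proposition \ref{uufpn}: since $G$ is of type $\UUFP_2$, the group satisfies property (2) and for every virtually cyclic subgroup $K$ the centraliser $C_G(K)$ is of type $\FP_2$. Specialising to $K=1$ gives that $G$ itself is of type $\FP_2$; specialising to $K=\langle g\rangle$ with $g\in G$ arbitrary gives that each $C_G(g)$ is of type $\FP_2$.

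Next, write the hypothesised decomposition as a short exact sequence $N \mono G \epi Q$ with $N$ nilpotent of class $2$ and $Q$ abelian. The hypotheses of Corollary \ref{class2} are now satisfied: $G$ is of type $\FP_2$, and $C_G(g)$ is of type $\FP_2$ for every $g\in N$. Corollary \ref{class2} therefore applies and $G$ is polycyclic.

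Finally, $G$ is polycyclic, hence polycyclic-by-finite, and it satisfies property (2) by the first step, so Corollary \ref{conjclassesPS} yields that $G$ is virtually cyclic.

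There is essentially no genuine obstacle here beyond verifying that the hypotheses of each cited result do apply. The only conceptual point worth noting is why the present theorem needs $\UUFP_2$ rather than the weaker $\UUFP_1$ that sufficed in the abelian-by-nilpotent case: Corollary \ref{class2}, which supplies polycyclicity for (nilpotent of class 2)-by-abelian groups, rests on an $\FP_2$ hypothesis (it depends on the Bieri--Strebel classification of metabelian $\FP_2$ quotients), and has no analogue requiring only finite generation. Consequently the centraliser information delivered by Proposition \ref{uufpn} must be invoked at the $\FP_2$ level, which forces the assumption $\UUFP_2$.
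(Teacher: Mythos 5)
Your proof is correct and follows essentially the same route as the paper, which cites Proposition \ref{uufpn}, Corollary \ref{class2} and Theorem \ref{quotientnaf}; your final appeal to Corollary \ref{conjclassesPS} is just the packaged form of Theorem \ref{quotientnaf} (via Lemma \ref{max}), so the arguments coincide. Your closing remark on why $\UUFP_2$ rather than $\UUFP_1$ is needed is accurate.
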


\begin{proof} This follows from Proposition \ref{uufpn}, Corollary \ref{class2} and Theorem \ref{quotientnaf}.
\end{proof}

\begin{thm} Let $G$ be an abelian-by-polycyclic group of types $\FP_3$ and $\UUFP_{~1}$ such that centralizers of infinite order elements are finitely generated. Then $G$ is virtually cyclic.
\end{thm}

\begin{proof} This follows from Proposition \ref{uufpn}, Theorem \ref{FP3} and Corollary \ref{conjclassesAP}.
\end{proof}

\begin{cor}\label{last} Let $G$ be an abelian-by-polycyclic group of type $\UUFP_{~3}$  such that centralizers of infinite order elements are finitely generated. Then $G$ is virtually cyclic.
\end{cor}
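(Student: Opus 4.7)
The plan is to deduce this corollary directly from the immediately preceding theorem, which states that an abelian-by-polycyclic group that is simultaneously of type $\FP_3$ and of type $\UUFP_{~1}$ must be virtually cyclic. So all that needs to be verified is that the single hypothesis $\UUFP_{~3}$ forces both of those conditions.

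First, $\UUFP_{~3}$ trivially implies $\UUFP_{~1}$: any projective resolution of $\uuz$ over $\OVCG$ that is finitely generated through degree $3$ is in particular finitely generated through degree $1$. For the ordinary $\FP_3$ condition, I would invoke Proposition \ref{uufpn} applied to the trivial subgroup $K = 1$. The point to notice here is that the trivial group is (virtually) cyclic, so it genuinely belongs to $\mathcal{VC}$ and the proposition applies to it. Its centraliser in $G$ is all of $G$, so the proposition yields that $G = C_G(1)$ is of type $\FP_3$ as an ordinary group.

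With both of these conditions in hand, the preceding theorem applies to $G$ and delivers the conclusion that $G$ is virtually cyclic. There is no real obstacle beyond observing that the trivial subgroup is allowed as an element of $\mathcal{VC}$, which is what lets Proposition \ref{uufpn} collapse from a statement about centralisers of virtually cyclic subgroups to a statement about ordinary finiteness of $G$ itself. In this sense Corollary \ref{last} is essentially a repackaging: the bulk of the work has already been absorbed into Proposition \ref{uufpn}, Theorem \ref{FP3}, and Corollary \ref{conjclassesAP} via the previous theorem.
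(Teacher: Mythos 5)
Your proof is correct and follows the paper's own route exactly: the paper likewise obtains $\FP_3$ from Proposition \ref{uufpn} (the trivial subgroup being virtually cyclic, with centraliser all of $G$), notes implicitly that $\UUFP_{~3}$ implies $\UUFP_{~1}$, and then applies the preceding theorem. Your write-up merely makes explicit the steps the paper leaves tacit.
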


  \begin{proof} By Proposition \ref{uufpn} every group of type $\UUFP_{~3}$ is of type $\FP_3$.
  \end{proof}

  \medskip\noindent We shall conclude with a remark on virtual notions. Let $\mathcal C$ be a class of groups. We say a group is virtually $\mathcal C$ if $G$ has a finite index subgroup belonging to $\mathcal C.$

 \begin{lemma}\label{finind}
 Let $G_1$ be a group of type $\UUFP_n$, $n \geq 0$. Then any subgroup $G$ of finite index in $G_1$
 is of type $\UUFP_n.$
 \end{lemma}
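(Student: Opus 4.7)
The plan is to produce an exact restriction functor from $\mathcal{O}_{\mathcal{VC}}G_1$-modules to $\OVCG$-modules that preserves the trivial module and carries finitely generated projectives to finitely generated projectives, and then to apply it term-wise to a projective resolution of $\uuz$ over $\mathcal{O}_{\mathcal{VC}}G_1$ whose terms up to degree $n$ are finitely generated.

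First I would define $\operatorname{Res}$ by the formula $(\operatorname{Res} M)(G/K):=M(G_1/K)$ for any virtually cyclic $K\leq G$, which is also virtually cyclic as a subgroup of $G_1$. A $G$-equivariant map $\phi\colon G/K_1\to G/K_2$ is encoded by a coset $gK_2$ with $g^{-1}K_1g\leq K_2$, and the same coset defines a $G_1$-equivariant map $\tilde\phi\colon G_1/K_1\to G_1/K_2$; setting $(\operatorname{Res} M)(\phi):=M(\tilde\phi)$ is functorial, since composition of $G$-maps is encoded on coset representatives in the same way as composition of the corresponding $G_1$-maps. Because exactness of Bredon modules is pointwise, $\operatorname{Res}$ is exact, and directly from the definitions one has $\operatorname{Res}\uuz=\uuz$.

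The key computation is that $\operatorname{Res}$ sends finitely generated free modules to finitely generated free modules. For $H$ virtually cyclic in $G_1$, decompose $G_1$ as a finite union of double cosets $\bigsqcup_{g\in D}GgH$, which is finite because $[G_1:G]<\infty$. Then as a $G$-set,
\[
G_1/H\;\cong\;\bigsqcup_{g\in D}G/L_g,\qquad L_g:=G\cap gHg^{-1},
\]
and each $L_g$ is virtually cyclic as a subgroup of the virtually cyclic group $gHg^{-1}$. Taking $K$-fixed points and unwinding definitions gives a natural isomorphism of $\OVCG$-modules
\[
\operatorname{Res}\Z[\blah,G_1/H]\;\cong\;\bigoplus_{g\in D}\Z[\blah,G/L_g],
\]
which is finitely generated and free. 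It follows that $\operatorname{Res}$ carries any finitely generated projective $\mathcal{O}_{\mathcal{VC}}G_1$-module, being a summand of a finitely generated free one, to a finitely generated projective $\OVCG$-module.

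Finally, applying $\operatorname{Res}$ term-wise to a projective resolution $P_\bullet\to\uuz$ of $\mathcal{O}_{\mathcal{VC}}G_1$-modules with $P_i$ finitely generated for $i\leq n$ yields, by exactness of $\operatorname{Res}$, an exact complex $\operatorname{Res}P_\bullet\to\uuz$ of $\OVCG$-modules whose terms up to degree $n$ are finitely generated projectives, certifying that $G$ is of type $\UUFP_n$. The only genuinely non-trivial ingredient is the double-coset description of $\operatorname{Res}\Z[\blah,G_1/H]$; this is precisely where the family $\mathcal{VC}$ being closed under subgroups (so that each $L_g$ stays in the family) and the finite-index hypothesis $[G_1:G]<\infty$ (so that $D$ is finite) are both used.
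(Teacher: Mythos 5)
Your proposal is correct and follows essentially the same route as the paper: restrict a projective resolution over $\mathcal{O}_{\mathcal{VC}}G_1$ to $\OVCG$ and reduce to checking that each $\Z[\blah,G_1/H]$ becomes finitely generated projective, via the decomposition of $G_1/H$ into finitely many $G$-orbits $G/(G\cap H^{x^{-1}})$. Your write-up is just a more detailed version (explicit restriction functor, double-coset indexing) of the paper's two-line argument.
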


 \begin{proof}
 Any $\OVCG_1$-projective resolution of $\uuz$ can be viewed as an $\OVCG$-projective resolution.
 We therefore need to show that every finitely generated $\OVCG_1$-projective is finitely generated as
 $\OVCG$-module. This reduces to showing that $\Z[\blah, G_1/K]$ is a finitely generated $\OVCG$-projective for all virtually cyclic subgroups $K$ of $G_1$. $G_1/K$ viewed as a $G$-set is the $G$-finite
 set $\Omega = \sqcup G/G\cap K^{x^{-1}}$, where $x$ runs through a set of representatives of the finite number of cosets of $G$ in $G_1.$
\end{proof}

\begin{remark} In the statements of Theorem \ref{mainresult} through to Corollary \ref{last} above we can
let $G$ be virtually-$\mathcal C$, where $\mathcal C$ stands for soluble, abelian-by-nilpotent, (nilpotent of class 2)-by-abelian or abelian-by-polycyclic respectively.
\end{remark}

\noindent As mentioned in the introduction, we can also strengthen Theorem \ref{mainresult}
to apply to the class of elementary amenable groups:

\begin{cor} Elementary amenable groups of type $\UUFP_\infty$  such that centralizers of infinite order elements are $\FP_\infty$ are virtually cyclic.
\end{cor}

\begin{proof}
These groups are of type $\FP_\infty$ by Proposition \ref{uufpn} and hence have finite Hirsch length and a bound on the orders of their finite subgroups \cite{kropholler93b}. Combining this with  \cite{hillmanlinnell}, we might  assume that the group is virtually soluble. Now apply Lemma \ref{finind}.
\end{proof}


\end{document}